


\documentclass[11pt,a4paper]{article}

\usepackage{fullpage}
\usepackage{amsmath,amsfonts,amsthm,amssymb}
\usepackage{verbatim} 


\usepackage{esint}

\numberwithin{equation}{section}


\newtheorem{theorem}{Theorem}
\newtheorem{lemma}{Lemma}[section]
\newtheorem*{lemma*}{Lemma}
\newtheorem*{prop*}{Proposition}
\newtheorem*{remark*}{Remark}
\newtheorem{remark}[lemma]{Remark}
\newtheorem{cor}[lemma]{Corollary}
\newtheorem*{cor*}{Corollary}


\providecommand{\ud}{\, \mathrm{d}}
\providecommand{\dxy}{\ud x \ud y}
\providecommand{\dx}{\ud x}
\providecommand{\dy}{\ud y}
\providecommand{\R}{\mathbb{R}}
\providecommand{\oh}{\frac{1}{2}}

\providecommand{\cw}{c_w}
\newcommand{\C}{(C_1 \pi^2)}

\renewcommand{\b}{\bar b}
\renewcommand{\l}{l_0}
\renewcommand{\omega}{w}
\renewcommand{\ln}{\log}


\begin{document}

\title{The coarsening of folds in hanging drapes}

\author{Peter Bella\thanks{Max Planck Institute for Mathematics
in the Sciences, Leipzig, Germany. Email: bella@mis.mpg.de. This work was begun while PB was a
PhD student at the Courant Institute of Mathematical Sciences. Support from NSF grant
DMS-0807347 is gratefully acknowledged.} \ and Robert V. Kohn\thanks{Courant Institute of
Mathematical Sciences, New York University, New York, NY. Email: kohn@cims.nyu.edu. Support is
gratefully acknowledged from NSF grants DMS-0807347, DMS-1311833 and OISE-0967140.} }

\maketitle

\begin{abstract}
We consider the elastic energy of a hanging drape -- a thin elastic sheet, pulled down by the
force of gravity, with fine-scale folding at the top that achieves approximately uniform
confinement. This example of energy-driven pattern formation in a thin elastic sheet is of
particular interest because the length scale of folding varies with height. We focus on how
the minimum elastic energy depends on the physical parameters. As the sheet thickness
vanishes, the limiting energy is due to the gravitational force and is relatively easy to
understand. Our main accomplishment is to identify the ``scaling law'' of the correction due
to positive thickness. We do this by (i) proving an upper bound, by considering the energies
of several constructions and taking the best; (ii) proving an ansatz-free lower bound, which
agrees with the upper bound up to a parameter-independent prefactor. The coarsening of folds
in hanging drapes has also been considered in the recent physics literature, using a
self-similar construction whose basic cell has been called a ``wrinklon.'' Our results
complement and extend that work, by showing that self-similar coarsening achieves the optimal
scaling law in a certain parameter regime, and by showing that other constructions (involving
lateral spreading of the sheet) do better in other regions of parameter space. Our analysis
uses a geometrically linear F\"{o}ppl-von K\'{a}rm\'{a}n model for the elastic energy, and is
restricted to the case when Poisson's ratio is zero.
\end{abstract}

\section{Introduction}

We consider a hanging drape -- a thin elastic sheet, pulled down by the force of gravity, with
fine-scale folding at the top that achieves approximately uniform confinement. We are
interested in how the shape of the sheet varies with height. Since bending costs elastic
energy, one expects to see less bending far from the top. This effect can be achieved by two
rather different mechanisms: coarsening of the folds, or spreading of the sides. Our analysis
includes a study of these two mechanisms and how they interact.

Our viewpoint is variational: we focus on how the minimum elastic energy depends on the
physical parameters. Our elastic energy functional has three terms: the {\it membrane energy}
(which penalizes stretching), the {\it bending energy} (which penalizes bending), and a {\it
loading} term (representing the effect of gravity). The membrane energy is nonconvex, while
the bending energy acts as a regularizing singular perturbation. To capture the essential
behavior in the simplest possible setting, we use a F\"{o}ppl-von K\'{a}rm\'{a}n
model with Poisson's ratio $0$ for the membrane energy. The limiting behavior as the sheet
thickness $h \rightarrow 0$ is then quite easy to understand: the sheet hangs straight down,
under tension due to the force of gravity. The energy of this deformation is easy to find. Our
main accomplishment is to assess (at the level of scaling) the leading-order correction
due to positive $h$. We do this by (i) proving an upper bound on the minimum energy, by
considering several constructions and taking the best; and (ii) proving an ansatz-free lower
bound, in which the correction due to positive $h$ agrees -- up to a parameter-independent
prefactor -- with the upper bound. (This summary is slightly misleading: in fact the thickness
enters our bounds through a nondimensional ratio that also involves the gravitational force,
the curtain's height, and the length scale of the wrinkling imposed at the top; moreover our
results are not asymptotic as $h \rightarrow 0$ -- rather, they require mainly that $h$ be
small compared to the length scale of the wrinkling imposed at the top. For a precise
statement of our bounds see Section \ref{sec:result}.)

The coarsening of folds or wrinkles in a sheet under tension has also been considered in the
physics literature
\cite{bib-mahacerda-draping,bib-benny-fissioning,bib-benny-raft,bib-benny-buildingblocks,bib-romandrapes}.
Our work is especially strongly connected with the treatment of ``heavy sheets'' in
\cite{bib-romandrapes}, which uses a self-similar construction whose basic cell is called a
``wrinklon.'' One of the constructions used for our upper bound involves coarsening of the
folds with no lateral spreading; it is essentially the same as the deformation studied in
\cite{bib-romandrapes} (see Remark \ref{link-to-vandeparre-etal}). Our results
complement and extend those of that paper, by showing that the self-similar
coarsening considered there achieves the optimal scaling law in a certain parameter
regime, and by showing that other constructions (involving
lateral spreading of the sheet) do better in other regions of parameter space. We note in
passing that \cite{bib-romandrapes} also discusses the coarsening of folds in ``light
sheets.'' An analysis of that problem in the spirit of the present paper has (to the best of
our knowledge) not yet been done, though the blistering problem considered in
\cite{bib-blisters-linear,bib-blisters,bib-jin-sternberg} seems closely related.

In focusing on the minimum energy, we obtain something like a phase diagram. Each of the
constructions used for the upper bound has a different energy scaling law, which can be
expressed in terms of two nondimensional parameters (see \eqref{avgeps}). By considering
which construction is best, we divide the parameter space into regions according to the
character of the optimal construction. Since we obtain scaling laws not prefactors, our
analysis leaves some uncertainty about the exact locations of the ``phase boundaries.'' Our
lower bound assures that our list of constructions is complete, i.e. we have not forgotten any
phases. However our results address {\it only} the energy scaling law -- they do not exclude
the possibility that some other construction, qualitatively different from the ones considered
here, could also achieve the optimal scaling in some region of parameter space.

Mathematically speaking, there is nothing novel about using the best of several constructions
to obtain an upper bound on the minimum energy. The challenge of finding a lower bound that
matches (with respect to scaling) the upper bound is however far from routine. Something
similar has been achieved in model problems motivated by twinning in martensite
\cite{bib-bob+muller1}, uniaxial ferromagnets \cite{bib-choksikohnotto,bib-knupfermuratov},
the intermediate state of a type-I superconductor \cite{bib-choksi-conti-bob-otto}, structural
optimization \cite{kohn2014optimal,kohn2015optimal}, tension-driven wrinkling in thin elastic
sheets \cite{bib-bellakohn2}, and compression-driven wrinkling in thin elastic sheets
\cite{bib-bellakohn1,bib-blisters-linear,bib-blisters,bib-jin-sternberg,bib-kohn-nguyen}. All
these papers consider nonconvex variational problems regularized by higher order singular
perturbations, which develop microstructure as the coefficient of the regularizing term (call
it $h$) tends to zero. There is a general framework, known as ``relaxation,'' for finding the
limiting energy as $h \rightarrow 0$ \cite{bib-dacorogna-directmethods}. While there is as yet
no general framework for analyzing the correction to the energy associated with positive $h$,
some principles are beginning to emerge \cite{bib-kohn-icmproceedings}; one is the importance
of interpolation inequalities, which play a major role in many of the articles just cited and
also in the present work. In some settings the relaxed problem is very degenerate, providing
little guidance about the character of the microstructure. In the setting of this paper (as in
\cite{bib-bellakohn2}) the relaxed problem has a unique solution, which determines the
direction (but not the length scale) of the wrinkles. The nondegeneracy of the relaxed problem
makes the tension-driven wrinkling problems considered here and in \cite{bib-bellakohn2} quite
different from the compression-driven wrinkling problems considered in
\cite{bib-bellakohn1,bib-blisters-linear,bib-blisters,bib-jin-sternberg,bib-kohn-nguyen}.

Our work is closely related to contemporary work by the second author and Hoai-Minh Nguyen
\cite{bib-kohnnguyen-raft} concerning the wrinkling seen in a confined floating sheet
\cite{bib-benny-raft}. That problem has many similarities to the one considered here: the
sheet is in tension, the wrinkling is induced by lateral confinement, and the length scale of
the wrinkling is smallest at the unconfined edges of the sheet. There is an underlying energy,
considered already in \cite{bib-benny-raft}. The analysis in \cite{bib-kohnnguyen-raft}
adopts a variational viewpoint similar to that of the present paper, proving matching (with
respect to scaling) upper and lower bounds on the minimum energy. The construction leading to
the upper bound uses self-similar coarsening, and is similar to the ``type I deformation''
discussed in Section \ref{sectTypeI}. The proof of the lower bound shows that changing the length
scale costs energy, using an argument closely related to this paper's Lemma \ref{mainlemma}.
There is, however, a
key difference between the hanging drape considered here and the confined floating sheet
considered in \cite{bib-kohnnguyen-raft}, namely: the hanging drape can relieve its
confinement by lateral spreading, whereas in the floating sheet this is prohibited by the
boundary condition. This extra freedom means that we must consider spreading as well as
coarsening in connection with the upper bound, and it means that many new arguments are needed
for the lower bound. (The analysis of the floating sheet also has complications not present
here; in particular, the scale of the wrinkling at the unconfined boundary is determined by
energy minimization rather than being imposed as a boundary condition.)

It is tempting to think that the energy-minimizing deformations should resemble the ones used
to prove the upper bound. This is the principle, for example, behind the argument in
\cite{bib-romandrapes} concerning how the length scale of the folding in a hanging drape
varies with height. As noted above, however, our rigorous results concern only the energy, not
the spatial structure of the energy-minimizing pattern. There are in fact relatively few
problems involving microstructure where the optimal patterns are understood. One is
\cite{bib-contibranching}, which considers a model for the refinement of martensite twins near
an interface with austenite; another is \cite{bib-bella-transition}, which considers a model
for the behavior of a sheet near the boundary of a wrinkled region.

The preceding paragraphs fall far short of a comprehensive review concerning the mechanics of
thin sheets and the associated mathematical challenges. With respect to wrinkling or folding,
we have emphasized work in which the length scale of the microstructure varies with position,
thereby omitting many recent contributions including
\cite{bib-bedrossiankohn1,bib-bennynew,bib-benny1,bib-king-pnas,bib-schroll-2013prl}. Moreover
we have completely omitted other aspects of thin elastic sheets, such as the formation of
localized defects. For a broader review concerning the mechanics of thin sheets we refer to
\cite{bib-audolypomeau}. We also mention in passing some more mathematical work concerning the
energy scaling laws of specific defects in thin sheets
\cite{bib-brandman-kohn-nguyen,bib-conti,bib-olbermannmuller-dcone,bib-venkataramani}.

Returning to our hanging drape, we now offer some heuristics to motivate the analysis that
begins in Section \ref{sec:model}. Here and in the rest of the paper, we often speak of
``wrinkles'' rather than ``folds.'' This is because we do not expect (and our model does not
predict) sharp folds similar to creases in a piece of paper; rather, we expect the
out-of-plane profile of the drape to be smooth (perhaps approximately sinusoidal). Also, we
often speak of a ``sheet'' rather than a ``drape,'' since we model the drape as a thin elastic
sheet.

Recall that the boundary condition at the top involves small-scale wrinkling, which costs
bending energy. If the thickness of the sheet is large enough to make the bending resistance
important, then (as noted earlier) one expects to see less bending far from the top. This can
be achieved by two rather different mechanisms: coarsening of the folds or spreading of the
sides. A third option -- compression of the sheet -- is not anticipated since buckling is
energetically preferred over compression. Our results support the intuition that there should
be no compression, since our upper bound uses compression-free constructions while our
matching lower bound has no such hypothesis. (They do not, however, show that the minimizer is
compression-free.)

Gravitational effects oppose both the coarsening of the folds and the lateral spreading of the
sheet. Indeed, gravity pulls vertically, favoring a configuration that hangs straight down.
Coarsening increases the amplitude of the out-of-plane displacement, while spreading involves
horizontal deformation; the presence of either mechanism works against the effects of
gravity.

The deformation of the hanging drape reflects the competition between these effects. A gradual
deviation from ``hanging straight down'' is preferred (by the gravitational effects) over an
abrupt deviation. Therefore we expect the wrinkling to gradually coarsen and the sheet to
gradually get wider as the distance from the top increases. Our upper bounds are consistent
with this expectation; moreover they provide guidance (consistent with that in
\cite{bib-romandrapes}) concerning the rate at which coarsening occurs, and they identify the
parameter regime in which spreading affects the scaling law. However while the spreading of a
real drape is smooth, the spreading in our constructions is only piecewise smooth. This is
convenient, since it makes it easier to estimate the elastic energy; and it is permissible,
since our results concern the energy scaling law (not the prefactor, and not the character of
the optimal deformation).

The article is organized as follows. Sections~\ref{sec:model}-\ref{sec:bulk} are all in some
sense introductory: Section~\ref{sec:model} presents our model and discusses
in general terms the properties of the low-energy configurations; Section~\ref{sec:result}
presents our main result -- the matching (with respect to scaling) upper and lower
bounds -- and briefly discusses when (in terms of the physical parameters) lateral spreading
affects the energy scaling law; Section~\ref{sec:rescaling} rescales the energy to
decrease the number of independent parameters; and Section~\ref{sec:bulk} discusses
the ``bulk energy,'' which plays the role of a relaxed problem (its minimum is the
limiting value of the energy when $h \rightarrow 0$). With those preliminaries in place, we turn in
Section~\ref{sec:ub} to the upper bound, which is proved by considering several candidate
deformations. Finally, Section~\ref{sec:lb} presents our lower bound, which is mathematically
speaking the subtlest aspect of the paper. A sketch of the main ideas underlying the lower
bound is given in Section~\ref{subsec:7-1}.
\bigskip

{\bf Notation.} We shall denote by $C$ a generic constant, i.e., a constant whose value may
change throughout the computation. The symbols $\sim$, $\lesssim$, and $\gtrsim$ indicate that
the estimates hold up to a finite universal multiplicative constant $C$, e.g., $a \lesssim b$
stands for $a \le Cb$. The tensor product $u \otimes v$ is defined as the $3 \times 3$ matrix
that is component-wise defined by $(u \otimes v)_{ij} = u_iv_j$. When $f(x,y)$ is a function,
we often use subscripts to denote partial derivatives; for example
$f_{,x}=\partial f/\partial x$ and $f_{,xy}= \partial^2f/\partial x \partial y$.
Finally, when $a$ and $b$ are real
numbers, we write $a \wedge b$ for the minimum of $a$ and $b$.


\section{The model}\label{sec:model}

In this section
we discuss the domain, the energy functional, and the boundary condition at the top.
We also discuss in general terms the expected behavior.

We assume the drape has (a small) thickness $h > 0$ and a rectangular shape of width $2W$ and
length $L$ (we choose width $2W$ in order to have a symmetric domain $[-W,W]$). We denote the
domain by $\Omega := [-W,W] \times [-L,0]$. The curtain is clamped at the top $\Gamma_T := \{
(x,y) \in \Omega : y=0 \}$ while it is free to move elsewhere. The wrinkles
prescribed at $\Gamma_T$ will have a (small) wavelength $w_0$; their shape will be
specified in a moment (see \eqref{bdry}). For simplicity we assume that $W = k w_0$ for
some (typically large) integer $k$.

As usual in elasticity, the stable configurations of the drape are local minima of
an ``energy functional,'' obtained by adding the elastic energy and the work done by gravity.
Our goal (as discussed in the Introduction) is to understand how the minimum energy scales with
respect to the physical parameters. Evidently, we are studying the energy of the ground state.

For the elastic energy we use a {\it geometrically linear F\"oppl-von K\'arm\'an} model, and we take
Poisson's ratio to be zero. This is, admittedly, a qualitatively accurate model not a quantitatively
accurate one: for real materials Poisson's ratio is usually not zero, and the F\"oppl-von K\'arm\'an
framework is only appropriate when the out-of-plane deformations have small slope. We believe, however,
that our choice captures the essential physics of the phenomena we wish to study. This view
is supported by the mechanics and physics literature on wrinkling, where the
F\"oppl-von K\'arm\'an framework is widely used. It is also supported by the
mathematics literature on thin elastic sheets, where energy scaling laws initially derived
using a F\"oppl-von K\'arm\'an model have been shown to hold also in more nonlinear settings, see
e.g. \cite{bib-blisters,bib-conti}. (For further discussion about the appropriateness of the
F\"oppl-von K\'arm\'an framework see e.g. \cite{bib-venkataramani}.)
Based on the preceding considerations, our energy functional is:
\begin{equation}\label{energy}
 E_h ( u,\xi ) = \iint_\Omega |e(u) + \frac{1}{2} \nabla \xi \otimes \nabla \xi|^2 + h^2 |
 \nabla^2 \xi |^2 \dxy + \tau \iint_\Omega u_y \dxy,
\end{equation}
where $u = (u_x,u_y)$ denotes the in-plane displacement, $\xi$ is the out-of-plane
displacement, $e(u) = \frac{\nabla u + \nabla u^T}{2}$ is the symmetric gradient of $u$, and
$\tau > 0$ is a given parameter (the ratio between the gravitational constant and Young's
modulus of the elastic material). For our geometrically linear F\"oppl-von K\'arm\'an model
to be reasonable we want the curvature to be much smaller than $1/h$. This means that any
length scale in the deformation (in particular the period $w_0$ of the prescribed wrinkling
at the top) should be larger than $h$.

We turn now to the boundary condition $u(x,0)$ and $\xi(x,0)$ imposed
at $\Gamma_T$ (the top of the sheet). The out-of-plane deformation $\xi$ should be
periodic with period $w_0$ (note that since we assume $W=kw_0$, $\Gamma_T$ is filled by
exactly $k$ periods). Moreover we want to avoid strain in the horizontal
direction, and the deformation should achieve a specified overall horizontal
compression factor $\Delta$. Finally, it is natural to choose $u(x,0)$ and $\xi(x,0)$
so that the bending energy is minimized subject to these constraints.
Focusing initially on the two-period interval $[-w_0,w_0]$, we seek
$u_0,\xi_0 : [-w_0,w_0] \to \mathbb{R}$ such that
\begin{gather}\label{incompressible}
u_{0,x}(x) + \xi_{0,x}^2(x)/2 = 0, \nonumber \\
u_0(-w_0) - u_0(w_0) = 2 \Delta w_0, \nonumber
\end{gather}
and such that the bending energy
\begin{equation}\nonumber
 h^2 \int_{-w_0}^{w_0} \xi_{0,xx}^2(x) \dx
\end{equation}
is minimized subject to these constraints. Using the method of
Lagrange multipliers one finds that the choice
\begin{equation}\nonumber
 \xi_0(x) = \frac{w_0\sqrt{\Delta}}{\pi}\sin (2\pi x/w_0)
\end{equation}
is optimal. Extending $\xi_0$ by periodicity, we are led to impose the boundary condition that
\begin{equation}\label{bdry}
 \xi(x,0) := \frac{w_0 \sqrt{\Delta}}{\pi} \sin (2\pi x w_0^{-1}),\quad u_x(x,0) :=
 -\frac{1}{2} \int_0^x |\xi_{,x}(t,0)|^2 {\ud t},\quad u_y(x,0)=0
\end{equation}
at $\Gamma_T$.

The elastic energy of our boundary condition is
$h^2 \int_{-W}^W \xi_{,xx}^2 \dx = 2Wh^2 \Delta (8\pi^2 w_0^{-2})$. For the trivial planar
deformation $\xi(x,0) \equiv 0, u_x(x,0) = -\Delta x$ the elastic energy
$\int_{-W}^W u_{,x}^2 \dx$ is $2W \Delta^2$. Our boundary condition has lower energy than the
trivial one when
\begin{equation} \label{hwdelta}
h \le w_0 \sqrt{\Delta} (8\pi^2)^{-1/2}.
\end{equation}
Thus it is reasonable to prescribe~\eqref{bdry} only if~\eqref{hwdelta} is satisfied.

\subsection{The expected behavior}

We now discuss the expected form of a deformation with small energy. This is, in effect, a
description of the deformation associated with our upper bound -- which achieves the optimal
scaling, according to our lower bound.

Since we do not prescribe boundary conditions on the lateral part of the boundary, the sheet is free
to get wider (and, therefore, to relax part of the confinement forced by the boundary
conditions at the top). Using two different constructions, we will show that the energy
required to significantly change the value of $u_x$ (i.e. to release the lateral confinement)
over the length $\l$ from the top of the sheet is at most of order $W^2\Delta^2
\min(W\l^{-1},W^3\l^{-3})$. The energy required to significantly decrease the amplitude of the
out-of-plane displacement $\xi$ scales like $W(\Delta w_0^2) \tau L \l^{-1}$. If the sheet releases the
confinement, then both of these terms contribute to the energy.

Assuming the sheet avoids compression, besides getting wider towards the bottom
it needs to waste some arc length in the horizontal direction in the region
where it is confined. This can be done by buckling out of the plane in wrinkles
with an ``average period'' $w=w(y)$. The boundary condition at
$\Gamma_T$ sets $w(0) = w_0$. The bending energy of a deformation consisting of wrinkles with
period $w$ is $16\pi^2W\Delta h^2 w^{-2}$. It is obvious that this term prefers to increase
the period $w$ as fast as possible towards the bottom of the sheet (away from $\Gamma_T$), but
a rapid change in the length scale $w$ would require a large change in the amplitude of the
wrinkles (large $\xi_{,y}$). Since the sheet is stretched in the vertical direction by gravity,
the part of the energy coming from stretching in the vertical
direction prefers small $\xi_{,y}$. In the end, the competition between these two preferences
determines the rate at which the length scale $w(y)$ increases.

In our constructions, the variation of $w(y)$ is achieved using ``building blocks.''
Each building block is a deformation defined on a rectangle of width
$w$ with sinusoidal profiles at the top and bottom boundaries with period $w$ and $3w$
respectively. The idea of using such building blocks was already present in
\cite{bib-blisters-linear,bib-benny-fissioning,bib-jin-sternberg}; our building blocks are
called wrinklons in \cite{bib-romandrapes}. As we'll see in Section~\ref{sec:ub}, energy
minimization requires the height $l$ and width $w$ of a building block to be related by
$l \sim w^2 \sqrt{\tau L} h^{-1}$.

The optimal number of building blocks -- i.e. the number of generations of coarsening -- depends
on several parameters. It is an increasing function of $L$ (in a longer sheet there is room
for more generations of coarsening), and an increasing function of $h$ (a thicker sheet is
harder to bend, so it decreases the amount of bending faster).
On the other hand, the number of building blocks is a decreasing function of $w_0$ (finer
wrinkles require more bending, so a sheet with finer wrinkles at the top prefers to coarsen
the wrinkles faster), and a decreasing function of $\tau$ (in a heavier drape the effect of gravity
is stronger, so out-of-plane displacement is more expensive, which implies that
coarsening is also more expensive).

If the sheet is very long, the coarsening process may finish at a height above the bottom of the
drape. As one goes further down, the out-of-plane displacement becomes an affine function with
the correct slope. From this point on, there are no more contributions to the energy
and the sheet does not change its shape (except for the vertical
deformation due to stretching).

Our discussion has emphasized the properties of the deformation in the horizontal direction. In
the vertical direction the situation is much simpler. Since the sheet is pulled down by
gravity independently of the horizontal position, we expect the vertical deformation to be
independent of $x$ (up to a small correction due to the wrinkling). In
Section~\ref{sec:bulk} we will formulate a one-dimensional variational problem which will be
used to identify the main part of the optimal vertical displacement $u_y$.

\section{The main result}\label{sec:result}

This section
presents our main result, and provides some discussion to help elucidate its
consequences.

\begin{theorem}\label{thm1}
Assume
\begin{gather}\label{tauL}
\tau L \ge 4,\\
h\Delta^{-1/2} \le w_0 \le (2\cw) W,\label{wgeh2}
\end{gather}
where $\cw > 0$ is a small universal constant. Then there exist universal
constants $C_{UB} > C_{LB} > 0$ such that for any deformation $(u,\xi)$ which
satisfies~\eqref{bdry} we have
\begin{equation}\label{result}
-\frac{1}{12}\tau^2L^3(2W) + C_{LB} \epsilon \le
\min_{(u,\xi)} E_h(u,\xi) \le -\frac{1}{12}\tau^2L^3(2W) + C_{UB} \epsilon
\end{equation}
where
\begin{multline}\label{epsilon}
\epsilon := W\Delta \min\Bigg[
h \sqrt{\tau L} \log \left( w_0^{-2} \left( \frac{h L}{\sqrt{\tau L}} \wedge 4W^2\right) + 1\right),\\
\min_{l \in (0,L)} \left\{ h \sqrt{\tau L} \log \left( \frac{h l}{w_0^2 \sqrt{\tau L}} + 1 \right) +
w_0^2 \tau L l^{-1} +
W\Delta \min \left( \left( \frac{W}{l}\right), \left( \frac{W}{l}\right)^3 \right) \right\} \Bigg].
\end{multline}
\end{theorem}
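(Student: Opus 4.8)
The plan is to prove the two inequalities in \eqref{result} separately, after two preliminary reductions: use the rescaling of Section~\ref{sec:rescaling} to cut down the number of free parameters, and use the one-dimensional ``bulk'' problem of Section~\ref{sec:bulk} to peel off the leading term. Writing $E_h(u,\xi) = -\frac{1}{12}\tau^2 L^3(2W) + \mathcal E(u,\xi)$, where the first term is the infimum of the bulk functional (attained by the sheet hanging straight down, stretched vertically according to the minimizing profile $u_y(y)$), the excess $\mathcal E$ is nonnegative, and everything reduces to showing $C_{LB}\epsilon \le \inf\mathcal E \le C_{UB}\epsilon$. The hypotheses \eqref{tauL}--\eqref{wgeh2} will be used throughout to guarantee that the building blocks below actually fit inside $\Omega$ and that the logarithms in \eqref{epsilon} have nonnegative argument.

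For the upper bound one exhibits, for each of the two families entering the outer minimum in \eqref{epsilon}, a competitor built on top of the bulk deformation. The first family does pure coarsening with no lateral spreading: stack ``wrinklon'' building blocks, the $i$-th living on a horizontal strip of height $l_i \sim w_i^2\sqrt{\tau L}\,h^{-1}$ and tripling the period from $w_i$ to $w_{i+1}=3w_i$, starting from $w_0$ and continuing until the period reaches the lateral size $\sim W$ or the accumulated height reaches $\sim L$ (equivalently $w_i^2 \sim hL/\sqrt{\tau L}$). A single block contributes bending energy $\sim W\Delta\, h^2 w_i^{-2} l_i \sim W\Delta\, h\sqrt{\tau L}$, independent of the generation, while the in-plane field $u$ is chosen so that $e(u)+\tfrac12\nabla\xi\otimes\nabla\xi$ nearly vanishes (keeping the membrane term and the gravity correction lower order); summing over the $\sim \log\!\big(w_0^{-2}(hL/\sqrt{\tau L}\wedge 4W^2)+1\big)$ generations gives the first term of $\epsilon$. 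The second family coarsens only down to some height $l$ and then releases the confinement by widening the sheet over a further strip of height $l$; the spreading can be realized by two constructions (one of them the ``Type~I'' deformation of Section~\ref{sectTypeI}), with combined cost $W\Delta\cdot W\Delta\min\!\big((W/l),(W/l)^3\big)$, and below height $l$ the sheet hangs flat, so the out-of-plane amplitude surviving down to that height costs $\sim W\Delta\, w_0^2\tau L\, l^{-1}$ against gravity, plus the coarsening cost above, which again carries a logarithm. Taking the better of the two families and the best value of $l$ yields the claimed upper bound.

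The lower bound is ansatz-free and is the substance of the paper. After removing the bulk energy one must show the nonnegative excess $\mathcal E$ dominates $\epsilon$. The engine is Lemma~\ref{mainlemma}, an interpolation inequality expressing that the amplitude of the out-of-plane displacement (equivalently, the effective period of the wrinkling) cannot decrease over a vertical distance without spending a definite amount of the combined bending, membrane, and gravitational energy --- morally because $\xi_{,x}$ along a horizontal slice controls the local period, $\xi_{,y}$ is charged by the gravity term, and $\xi_{,xx}$ by the bending term, and these cannot all be small if the length scale changes. One then performs a dyadic decomposition of $[-L,0]$ in $y$: on each dyadic strip where the wrinkling is still fine, Lemma~\ref{mainlemma} charges a fixed amount $\sim W\Delta\, h\sqrt{\tau L}$, and the number of such strips before the scale could plausibly have coarsened to $\sim W$, or before gravity alone forces the amplitude down, is $\sim\log(\cdots)$ --- this produces the logarithmic factors, with the two caps $4W^2$ and $hL/\sqrt{\tau L}$ inside the log reflecting respectively the lateral size of the sheet and its vertical length. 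If instead the sheet relaxes its lateral confinement, then the membrane energy, charged against the variation of $u_x$, produces the term $W\Delta\cdot W\Delta\min((W/l),(W/l)^3)$, while whatever out-of-plane amplitude persists down to the transition height $l$ still contributes $W\Delta\, w_0^2\tau L\, l^{-1}$ against gravity; optimizing over $l$ recovers the inner minimum, and comparing with the no-spreading alternative recovers the outer minimum of \eqref{epsilon}. A sketch of this argument is given in Section~\ref{subsec:7-1} and the details in Section~\ref{sec:lb}.

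The hardest part, and the one I expect to absorb most of the work, is the lower bound's treatment of the interplay between coarsening and spreading: one must show that a single deformation cannot escape both mechanisms, i.e. that $\mathcal E$ is at least the \emph{minimum} (not merely the sum) of the coarsening cost and the spreading-plus-gravity cost, and one must do so without knowing in advance the height at which the sheet stops coarsening or starts spreading. This calls for a careful localization --- selecting good slices in $y$, tracking how the ``effective period'' and the ``effective width deficit'' propagate downward, and summing the dyadic contributions so that a lost generation of coarsening is compensated by either a gravitational or a membrane charge --- all while keeping every constant universal so that $C_{LB}$ really is parameter-free.
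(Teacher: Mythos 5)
Your plan matches the paper's architecture closely --- rescaling to $W=1/2$, $\Delta=1$; subtracting the bulk energy $-\tfrac{1}{12}\tau^2 L^3(2W)$ realized by the sheet hanging straight down; an upper bound built from a pure-coarsening (wrinklon) competitor together with coarsening-then-spreading competitors; and a lower bound built on an interpolation lemma (Lemma~\ref{mainlemma}), a dichotomy between confinement and spreading, and a dyadic count of coarsening generations. The scaling heuristics you invoke (block height $l_i \sim w_i^2\sqrt{\tau L}/h$, per-generation cost $\sim W\Delta\,h\sqrt{\tau L}$, spreading charge $\sim W^2\Delta^2\min\big(W/l,(W/l)^3\big)$) are the right ones, and the structure of the lower bound you describe --- each deformation must pay either the coarsening log or the spreading-plus-gravity cost, hence $\delta$ dominates the minimum --- is exactly what the paper proves via Lemmas~\ref{lm-xi1}--\ref{mainlemma}.

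Two details are stated in a way that would mislead you if you tried to execute the sketch. First, the choice $l_i \sim w_i^2 \sqrt{\tau L}/h$ is \emph{not} made so that the membrane and gravity corrections are ``lower order'' than the bending: it is the balance point at which the excess membrane term (coming from $f_{,y}\,\xi_{,y}^2$, of size $(w_i/l_i)^2\tau L$ per block) equals the bending term $h^2 w_i^{-2}$, with both contributing $\sim h\sqrt{\tau L}$ per generation; making the membrane excess strictly lower order would enlarge $l_i$ and let bending dominate with the wrong scaling. Second, the two spreading constructions are not ``one of them the Type~I deformation'': Type~I (Section~\ref{sectTypeI}) is precisely the pure-coarsening competitor with no spreading. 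The two spreading variants live inside the Type~II/III constructions (Sections~\ref{sectTypeII}--\ref{sectTypeIII}) and correspond to two choices of $U_y$ below the transition height: keeping $U_y=f$ charges the shear term and yields cost $\sim l^{-1}$, while choosing $U_y$ so the shear term vanishes charges $|U_{y,y}-f_{,y}|^2$ and yields $\sim l^{-3}$. Finally, a small slip of direction: in Lemma~\ref{mainlemma} the charge is for $\|\xi(\cdot,y)\|_{L^2}$ \emph{increasing} as one moves down (the wrinkles coarsen and their amplitude grows), not decreasing. None of these alters your overall strategy, which is the paper's.
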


\begin{remark*}
The first inequality in~\eqref{wgeh2} is motivated by \eqref{hwdelta} (dropping the
constant $(8\pi^2)^{-1/2}$ compared to~\eqref{hwdelta} does not change the scaling
law). The second inequality in~\eqref{wgeh2} (with $c_w \ll 1$) says that the period
of the wrinkling prescribed at the top is much smaller than the width of the sheet.

We see that the minimum of the energy in Theorem~\ref{thm1} consists of two parts. One is
the ``bulk energy'' $-\frac{1}{12}\tau^2L^3(2W)$, which comes (as we'll show in
Section \ref{sec:bulk}) from the stretching of the sheet in the vertical direction
on account of gravity. The other is the ``excess energy'' due to positive $h$; it is of
order $\epsilon$. We observe that $\epsilon$ does indeed vanish as $h\rightarrow 0$; this
is consistent with the fact that wrinkling uses less energy in a sheet of smaller thickness.
\end{remark*}

\begin{remark*}
The paper \cite{bib-romandrapes} distinguishes between ``heavy sheets'' and ``light sheets,''
and shows that the rate at which wrinkles coarsen is different in the two cases.
The assumption~\eqref{tauL} means that our drapes are ``heavy sheets.''
\end{remark*}

The excess energy $\epsilon$ is the minimum of two different terms -- the first is the
energy of the construction when the sheet does not (significantly) release the lateral
confinement; the latter is the energy of the construction when the sheet releases (much
of) its lateral confinement. It is natural to try to understand, in terms of the
physical parameters, the regimes in which one or the other
term dominates. To this end we introduce the non-dimensional parameters:
\begin{equation}\nonumber
\alpha := \frac{h L}{w_0^2 \sqrt{\tau L}}, \quad
\beta := \frac{w_0}{L} \sqrt{\tau L}, \quad
r := \frac{l}{L}.
\end{equation}
We focus on the case when the sheet is not extremely long, in the sense that
$\alpha = \frac{h L}{w_0^2 \sqrt{\tau L}} \le \left( \frac{2W}{w_0} \right)^2$. Then we get
the following formula for the ``average'' excess energy:
\begin{multline}\label{avgeps}
\frac{\epsilon}{LW \Delta}\\
= \min \left( \alpha \beta^2 \log(\alpha+1),
\min_{r \in (0,1)} \left\{ \alpha \beta^2 \left( \log(\alpha r + 1) + (\alpha r)^{-1} \right) +
\frac{W}{L} \Delta \min\left( \left(\frac{W}{Lr}\right), \left(\frac{W}{Lr}\right)^3 \right)
\right\} \right).
\end{multline}
We first neglect the last term
$\frac{W}{L} \Delta \min\left( \left(\frac{W}{Lr}\right), \left(\frac{W}{Lr}\right)^3 \right)$
and investigate when
\begin{equation}\label{invest1}
\min_{r \in (0,1)} \log(\alpha r + 1) + (\alpha r)^{-1} \le  \log(\alpha +1).
\end{equation}
Since $r \in (0,1)$, for \eqref{invest1} to hold, the function
$\mathcal{F}(t) = \log(t + 1) + t^{-1}$ must achieve its minimum at a value $t_{\rm min}$ that is
smaller than $\alpha$. Therefore~\eqref{invest1} can be satisfied only if
\begin{equation}\nonumber
\alpha > \frac{1+\sqrt{5}}{2}.
\end{equation}
Since the last term in~\eqref{avgeps} (the one we neglected) is positive, it is clear that
when $\alpha < (1+\sqrt{5})/2$ the first term in \eqref{epsilon} is larger, i.e. the sheet
does not prefer to release most of the lateral confinement. On the other hand, if
$\alpha$ is considerably larger than $(1+\sqrt{5})/2$ we expect the second part of
the right hand side in~\eqref{avgeps} to be smaller provided the neglected term
is sufficiently small, in which case most of the lateral confinement should be released.


\section{The rescaling}\label{sec:rescaling}

The scaling law~\eqref{result} depends on numerous parameters: the sheet's length $L$, its
width $W$, and its thickness $h$, the ``gravitational'' coefficient $\tau$, and the period of
the wrinkling at the top $w_0$. To simplify the analysis, it is convenient to reduce the number
of parameters by nondimensionalizing the problem and by using its special structure to eliminate
$\Delta$.

The effect of nondimensionalization is that it permits us to consider only the case $W=1/2$.
This is achieved by measuring length in units of width. Explaining in detail: if $(u,\xi)$ is
a deformation defined in $(-W,W) \times (-L,0)$, we define a new deformation $(v,\mu)$ by:
\begin{equation}\nonumber
 v(x,y) := (2W)^{-1} u(2Wx,2Wy), \qquad \mu(x,y) := (2W)^{-1} \xi(2Wx,2Wy).
\end{equation}
The deformation $(v,\mu)$ is defined in $(-1/2,1/2) \times (-L/(2W),0)$, and we have
\begin{equation}\label{nondim1}
E_{\tilde h,1/2,\tilde L,\tilde \tau, \tilde w_0} (v,\mu) = (2W)^{-2} E_{h,W,L,\tau,w_0}(u,\xi),
\end{equation}
where we have listed all relevant parameters for the energy as indices. The rescaled parameters are:
\begin{equation}\nonumber
\tilde h = (2W)^{-1}h, \tilde L = (2W)^{-1}L, \tilde \tau = (2W)\tau, \tilde w_0 = (2W)^{-1} w_0.
\end{equation}
This nondimensionalization replaces $L$, $h$, and $w_0$ (which are lengths) by
their quotients with $2W$; similarly, it replaces $\tau$ (which has dimension ${\rm length}^{-1}$) by its
product with $2W$. We observe that~\eqref{nondim1} is consistent with~\eqref{result}, \eqref{epsilon}.
Therefore to prove Theorem~\ref{thm1} it is sufficient to consider the case $W = 1/2$.

Our second reduction uses the special structure of our energy functional -- specifically,
the fact that the membrane term is quadratic in $u$ and quartic in $\xi$, while the bending term is
quadratic in $\xi$, and the gravitational term is linear in $u$ -- to eliminate the parameter $\Delta$.
Defining
\begin{equation}\nonumber
v(x,y) := \Delta^{-1} u(x,y), \qquad \mu(x,y) := \Delta^{-1/2} \xi(x,y),
\end{equation}
we see that
\begin{equation}\nonumber
E_{\tilde h, \tilde \tau, 1} (v,\mu) = \Delta ^{-2} E_{h,\tau,\Delta} (u,\xi),
\end{equation}
where $\tilde h = \Delta^{-1/2}h, \tilde \tau = \Delta^{-1} \tau$. This relation is again
consistent with Theorem~\ref{thm1}. Therefore in proving the theorem it is sufficient to consider $\Delta = 1$.

In the rest of the paper we will assume that $\Delta = 1$ and $W=1/2$. Note that in this case the second
hypothesis \eqref{wgeh2} of Theorem~\ref{thm1} becomes
\begin{equation}\label{wgeh}
h \le w_0 \le \cw.
\end{equation}

\section{The bulk energy}\label{sec:bulk}

The first term in our energy scaling law~\eqref{result} is the limiting value of the minimum
energy as $h \rightarrow 0$. While the proof of this
assertion is given later on (in the course of establishing our upper and lower bounds), the present section
lays necessary groundwork by considering a ``bulk energy'' that includes only vertical stretching and gravity.

To motivate the definition of the bulk energy, we begin by substituting $h=0$ in the definition of our
functional $E_h$. Gravity pulls downward, so we expect the vertical displacement to satisfy
$u_{y,y} \ge 0$, and assuming this we have
\begin{multline}\nonumber
E_0(u,\xi) \ge \iint_{\Omega} |u_{y,y}(x,y) + \xi_{,y}^2(x,y)/2|^2 \dxy + \tau \iint_\Omega u_y(x,y) \dxy \\
\ge \iint_{\Omega} \left|u_{y,y}\right|^2(x,y) + \tau u_y(x,y) \dxy.
\end{multline}
Therefore it is natural to consider the bulk energy functional, defined by
\begin{equation}\label{bulk}
B(f) := \iint_\Omega |f_{,y}|^2 + \tau f \dxy,
\end{equation}
where $f : [-L,0] \to \mathbb{R}$, $f(0)=0$ ($f$ plays the role of $u_y$).
It is easy to find the unique minimizer of $B$:
\begin{equation}\label{f}
f(y) = (\tau y^2 + 2\tau L y)/4, \qquad \min B = B(f) = -\frac{1}{12}\tau^2 L^3.
\end{equation}
The following estimate for $f_{,y}(y) = \tau(y+L)/2$
will be useful later:
\begin{equation}\label{estf}
\begin{array}{cl}
 f_{,y}(y) \ge \tau L/4   & \mbox{if }  y \in [-L/2,0],\\
 0 \le f_{,y}(y)  \le  \tau L/2 & \mbox{if }  y \in [-L,0].
\end{array}
\end{equation}
We see that to have $f_{,y}(y) \ge 1$ for $y \ge -L/2$ we assumed~\eqref{tauL}.
For $\bar u_y(x,y) := f(y), \bar u_x(x,y) := u_x(x,0), \bar \xi(x,y) := \xi(x,0)$ we have
\begin{multline}\nonumber
E_0(\bar u,\bar \xi) = \iint_\Omega |\bar u_{x,x} + \bar \xi_{,x}^2/2|^2 +
|\bar u_{x,y} + \bar u_{y,x} + \bar \xi_{,x}\bar\xi_{,y}|^2/2 + |\bar u_{y,y} + \bar\xi_{,y}^2 / 2|^2 +
\tau \bar u_y \dxy \\
= \iint_\Omega |\bar u_{y,y}|^2 + \tau \bar u_y \dxy = B(f);
\end{multline}
thus $\min_{(u,\xi)} E_0(u,\xi)$ (subject to $u_{y,y} \ge 0$) is attained and is equal to $B(f)$.
We will see later that for a deformation $(u,\xi)$ to nearly minimize $E_h$, the vertical part of the deformation
$u_y$ must be close to $f$.


\section{The upper bound}\label{sec:ub}

This section proves our upper bound for the minimum energy. As shown in Section \ref{sec:rescaling},
it suffices to consider $W=1$ and $\Delta = 1$. Our goal is therefore to show that
\begin{equation}\label{ub1}
 \min E_h(u,\xi) \le -\frac{1}{12}\tau^2 L^3 + C_{UB} \epsilon,
\end{equation}
provided~\eqref{tauL} and~\eqref{wgeh} hold, $(u,\xi)$ satisfy the prescribed boundary condition~\eqref{bdry},
and $\epsilon$ is defined by~\eqref{epsilon}.

The proof of~\eqref{ub1} uses three types of deformations.
Our type I deformation involves self-similar coarsening of the wrinkles, while the sheet remains horizontally
confined. The coarsening is achieved through multiple generations of building blocks; the period of the
wrinkles is tripled in each generation. If the vertical length of each generation is chosen in the optimal
way, its contribution to the energy is of order $h \sqrt{\tau L}$. If the sheet is very long -- more specifically,
if we reach a generation of wrinkles with period comparable to the width of the sheet -- then we change
(within one generation) the out-of-plane displacement to an affine function (since when $\xi$ is an affine
function with the correct slope all the terms contributing to the excess energy vanish). This construction
involves no lateral spreading: the values of $u_x(-1/2,y)$ and $u_x(1/2,y)$ are independent of $y$. The
vertical deformation $u_y$ agrees with the minimizer $f$ of the bulk energy $B$.

To be energetically efficient, the coarsening process associated with a type I deformation needs some room. We
discuss the case when $L$ is too small for coarsening at the end of Section \ref{sectTypeI}. Since there is no
room for coarsening, in this setting the type I deformation keeps the profile of the wrinkling independent of $y$.

Our second type of deformation is a modification of the first. It also involves self-similar coarsening
of the wrinkles, but in contrast with the first type the horizontal confinement is relaxed at some point.
To describe it, consider a type I deformation, and choose a particular generation in the coarsening process.
We denote by $n$ the order of this generation (i.e. there are $n-1$ generations above) and by $l_n$ the
vertical length of the building blocks in this generation. Our type II deformation is
identical with the type I deformation through the first $n-1$ generations of the coarsening process,
but different starting at the $n$-th generation; it completely releases the horizontal
confinement of the sheet within the $n$-th generation (the value of $u_x$ at the extremes $\pm 1/2$
changes from $\pm 1/2$ at the top of the $n$th generation to $0$ at the bottom).

Full details of the type II deformation are given in Section~\ref{sectTypeII}, but here is a sketch.
If we keep $u_y = f$, changing $u_x$ from order $1$ to $0$ over the length $l_n$ results in the
term $|u_{x,y} + u_{y,x} + \xi_{,x}\xi_{,y}|^2$ being of order $l_n^{-2}$ (since $u_{y,x} = 0$
and the term involving derivatives of $\xi$ is not larger than $u_{x,y}$). Integrating this
term over the domain of size $l_n$, we find that this mechanism for releasing the horizontal confinement
has an energetic cost of order $O(l_n * (1/l_n)^2) = O(l_n^{-1})$.
A different possibility is to set $u_{y,x} := -u_{x,y} - \xi_{,x}\xi_{,y}$. This obviously
eliminates the term $|u_{x,y} + u_{y,x} + \xi_{,x}\xi_{,y}|$ from the energy, but it increases the
term $|u_{y,y} + \xi_{,y}^2/2|$. For this alternative mechanism of releasing the horizontal
confinement the energetic cost turns out to be $O(l_n^{-3})$.

Our type III deformation is similar to
a type II deformation with $n=0$. It flattens the sheet and eliminates the horizontal confinement
over an interval of height starting near $y=0$. As with a type II deformation, there are two
versions of this construction, corresponding to different choices about which membrane term should be made
to vanish.

\subsection{The construction of a building block and the type I deformation}\label{sectTypeI}

In this section we construct a deformation consisting of several generations of building blocks, with
the length scale of wrinkling being tripled in every generation. We first define a building block,
which is a deformation $(v,\mu)$ (the horizontal and the out-of-plane displacement) defined on
$[0,1] \times [0,1]$ with wrinkles with period $1/3$ at the top and with period $1$ at the bottom
with periodic lateral boundary conditions (see~\cite{bib-benny-buildingblocks} for a numerical
study of the optimal shape for one such building block). Our Type I deformation is then obtained
by patching together rescaled versions of the building block (the boundary conditions for the
building block were chosen to make this possible).

The building block is a deformation $(v,\mu) : [0,1]\times [0,1] \to \R^2$. It has a sinusoidal profile
of period $1/3$ in a neighborhood of $y=0$, and a sinusoidal profile of period $1$ in a neighborhood of $y=1$.
There is neither compression nor tension in the $x$-direction, and both the membrane and bending energy are
finite. Saying the same in mathematical terms: both $x+v(x,y)$ and $\mu(x,y)$ should be $1$-periodic in $x$,
and they must satisfy:
\begin{gather}
\mu(x,y) = \frac{1}{3\pi} \sin(6\pi x), \quad
v(x,y) = -\frac{1}{2}\int_0^x |\mu_{,x}(t,0)|^2 \ud t, \qquad x\in [0,1], y \in [0,1/4],\label{block1}\\
\mu(x,y) = \frac{1}{\pi} \sin(2\pi x), \quad
v(x,y) = -\frac{1}{2}\int_0^x |\mu_{,x}(t,1)|^2 \ud t, \qquad x\in [0,1], y \in [3/4,1],\label{block2}\\
v_{,x}(x,y) + \frac{1}{2} |\mu_{,x}(x,y)|^2 = 0 \textrm{ for } (x,y) \in [0,1]^2,\quad
|\mu_{,y}| \le 1,\quad |\mu|\le1 \label{block3}\\
E_m := \iint_{(0,1)^2} |v_{,y} + \mu_{,x} \mu_{,y} |^2 + |\mu_{,y}|^2 \dxy < \infty  , \quad
E_b := \iint_{(0,1)^2} |\nabla^2 \mu|^2 \dxy < \infty.\label{block4}
\end{gather}
Such a building block can be obtained as follows: let $g_1, g_2 : [0,1] \to [0,1]$ be smooth functions
which satisfy
\begin{gather}\label{blockdef1}
\begin{array}{ll}
g_1(y) = 1, & y \in [0,1/4],\\
g_2(y) = 1, & y \in [3/4,1],
\end{array}\\
\label{blockdef3}
g_1^2(y) + g_2^2(y) = 1, \quad y \in [0,1],\\
|g_1'(y)| + 3|g_2'(y)| \le 3\pi, \quad y \in [0,1].\label{blockdef4}
\end{gather}
We define
\begin{equation*}
\mu(x,y) := \frac{g_1(y)}{3\pi} \sin (6\pi x) + \frac{g_2(y)}{\pi} \sin (2\pi x),\quad
v(x,y) := -\frac{1}{2}\int_0^x |\mu_{,x}(t,y)|^2 \ud t, \quad (x,y) \in [0,1]^2.
\end{equation*}
Using the properties of $g_1$ and $g_2$ it is easy to verify (\ref{block1}--\ref{block4}).
Indeed, \eqref{block1} and \eqref{block2} follow from~\eqref{blockdef1} and \eqref{block3}
follows from~\eqref{blockdef3} and \eqref{blockdef4}. Finally, \eqref{block4} is a direct
consequence of the smoothness of $g_1$ and $g_2$.

To define the type I deformation we patch together rescaled versions of the building block,
and use $u_y(x,y) := f(y)$ for the horizontal displacement. The new rescaled displacement
$u_x, \xi$ (defined on a rectangle of size $\omega \times l$ with the upper left corner at
$(x_0,y_0)$) is defined as:
$$
u_x(x,y) := x_0 + \omega v\left(\frac{x-x_0}{\omega}, -\frac{y-y_0}{l}\right), \quad
\xi(x,y) := \omega \mu\left(\frac{x-x_0}{\omega}, -\frac{y-y_0}{l}\right).
$$
From $|\mu|\le 1$ we get $|\xi| \le w$. The period of the wrinkles at the bottom and upper end
of this building block are $\omega$ and $\omega/3$, respectively. We also have
\begin{gather}\nonumber
u_{x,x} + \xi_{,x}^2 / 2 = v_{,x} + \mu_{,x}^2 / 2 = 0,\\ \nonumber
u_{x,y} + u_{y,x} + \xi_{,x}\xi_{,y} = \frac{\omega}{l} \left( v_{,y} + \mu_{,x}\mu_{,y}\right), \\ \nonumber
\left(u_{y,y} + \xi_{,y}^2 / 2\right)^2 - |f_{,y}|^2 = f_{,y} \xi_{,y}^2 + \xi_{,y}^4 / 4 =
(\omega/l)^2 \left( f_{,y} \mu_{,y}^2 + (w/l)^2  \mu_{,y}^4 / 4 \right),\\
\xi_{,xx} = w^{-1}\mu_{,xx}, \quad \xi_{,xy} = l^{-1}\mu_{,xy}, \quad \xi_{,yy} = w l^{-2}\mu_{,yy}. \nonumber
\end{gather}
In what follows we will always keep $\omega \le l$. Then~\eqref{block3} implies
$|(\omega/l)^2 \mu_{,y}^4/4| \le \mu_{,y}^2 / 4$ and $|\nabla^2 \xi|^2 \le \omega^{-2} |\nabla^2 \mu|^2$,
and the elastic energy of one building block (over the rectangle of size $\omega \times l$) is bounded by
\begin{equation}\label{blockenergy}
2 \omega l \left[ \left(\frac{\omega}{l}\right)^2 (1 + f') E_m + h^2 \omega^{-2} E_b\right] +
\iint (f'^2 + \tau f).
\end{equation}
When summed over the whole domain $\Omega$ the second term becomes exactly~\eqref{bulk}. We are
interested in the first term which is the excess energy due to positive $h$ (roughly speaking: the
energy due to wrinkling).

We will patch together several generations of building blocks. Let $N$ be the number of generations
(to be chosen later), and let $l_n$ and $\omega_n := 3^n w_0$ for $n=1,\dots,N$ be the length and
width of the building block in the $n$-th generation, respectively. We may assume without loss of
generality that $3^K w_0 = 1/2$ for some integer $K$.\footnote{Indeed, if (before nondimensionalization)
there is a $K$ such that $3^{K-1} w_0 < W/2 < 3^K w_0$, then we can consider a slightly wider drape,
of width $\overline{W}$ such that $\overline{W}/2 = 3^K w_0$ (for an upper bound it does no harm
to increase the domain). After nondimensionalization (which divides all lengths by $\overline{W}$), we
get a problem in which $3^K w_0 = 1/2$.}
Since one block has width $\omega_n$ and the width of the sheet is $1$, each generation has
$\omega_n^{-1}$ identical building blocks. We sum the first term in~\eqref{blockenergy} over all
building blocks to get
\begin{equation}\label{sumenergy}
\sum_{n=1}^N \frac{1}{\omega_n} 2\omega_n l_n
\left[ \left(\frac{\omega_n}{l_n}\right)^2 (1 + f')E_m + h^2 \omega_n^{-2} E_b\right] \le
2\sum_{n=1}^N \left[ \frac{\omega_n^2}{l_n} \tau L E_m + h^2 \omega_n^{-2}l_n E_b\right],
\end{equation}
where we used~\eqref{estf} and~\eqref{tauL} to obtain $1 + f' \le 1 + \tau L /2 \le \tau L$.

We know that for \eqref{sumenergy} to be small we want the two terms on the right-hand side to be of
similar value, i.e. $\frac{\omega_n^2}{l_n} \tau L E_m \sim h^2 \omega_n^{-2}l_n E_b$.
From $\omega_n = 3^n w_0$ (the period is tripled in each generation) we obtain
$l_n \sim 9^n w_0^2 \sqrt{\tau L} \sqrt{\frac{E_m}{E_b}} h^{-1}$. Motivated by this we set
\begin{equation}\label{ln}
l_n := w_n^2 \sqrt{\tau L} h^{-1} = 9^n w_0^2 \sqrt{\tau L} h^{-1}.
\end{equation}
Using~\eqref{wgeh} we can now verify the previously used assumption $\omega_n \le l_n$:
\begin{equation}\label{lnwn}
l_n / \omega_n = 3^n \sqrt{\tau L} w_0 / h \ge \sqrt{\tau L} \ge 2.
\end{equation}

Let us first assume that the sheet is not very long in the sense that
\begin{equation}\label{notverylong}
 \sum_{n=1}^K l_n \ge L.
\end{equation}
Then the length of the sheet $L$ can be expressed as the sum of the lengths of all
generations of building blocks, i.e. $\sum_{n=1}^N l_n \approx L$. Let us define $N$
to be the smallest integer such that $\sum_{n=1}^N l_n \ge L$. It follows from \eqref{notverylong}
that $N \le K$ and so $w_n \le 1$ for $n=1,\ldots,N$. Using~\eqref{ln} we obtain
\begin{equation}\label{N}
 N = \left\lceil \log_9 \left( \frac{8}{9}\frac{h L}{w_0^2 \sqrt{\tau L} } + 1\right)\right\rceil \sim
 \log \left( \frac{hL}{w_0^2 \sqrt{\tau L}} + 1\right),
\end{equation}
where we assumed that
\begin{equation}\label{ub-hlarger}
 h \ge w_0^2 \sqrt{\tau L}/L.
\end{equation}
From the definition of $l_n$ we compute that one generation of wrinkles ($w_n^{-1}$ identical
building blocks) costs $C h\sqrt{\tau L}$. Therefore, for $L$ not too large the energy~\eqref{sumenergy}
is bounded by $Ch \sqrt{\tau L} \log \left( \frac{h L }{w_0^2 \sqrt{\tau L}} + 1 \right)$ and
\begin{equation}\label{boundA0}
\min E_h(u,\xi) \le
-\frac{1}{12} \tau^2 L^3 + Ch \sqrt{\tau L} \ln \left( \frac{hL}{w_0^2\sqrt{\tau L}} + 1 \right).
\end{equation}

Now we treat the case when \eqref{notverylong} is false, i.e., $L_0 := \sum_{n=1}^K l_n < L$.
For $y \in [-L_0,0]$ we define the deformation the same way as before. For that we need $K$
generations, where each contributes to the excess energy by a multiple of $h\sqrt{\tau L}$.
Since $K$ is defined through $3^K w_0 = 1/2$, the excess energy for this part will be of
order $h \sqrt{\tau L} \log(w_0^{-1})$. To define
the deformation for $y \in [-L,-L_0]$, we first observe that at $y=-L_0$ we have
\begin{equation}
\xi(x,-L_0) = \frac{1}{\pi}\sin(2\pi x), \qquad
u_x(x,-L_0) = -\frac{1}{2} \int_{0}^x \xi_{,x}^2(t,-L_0) \ud t, \quad x \in [0,1].
\end{equation}
We set $l_{K+1} := \sqrt{\tau L}/h$, and for $x\in[0,1], y \in [-L,L_0)$ we define
\begin{equation}
\xi(x,y) = g_1\left(-\frac{y+L_0}{l_{K+1}}\right)\frac{\sin(2\pi x)}{\pi} +
g_2\left(-\frac{y+L_0}{l_{K+1}}\right)\sqrt{2}x,\quad
u_x(x,-L_0) = -\frac{1}{2} \int_{0}^x \xi_{,x}^2(t,-L_0) \ud t,
\end{equation}
where $g_1,g_2$ are functions $g_1,g_2$ from (\ref{blockdef1}-\ref{blockdef4}), extended
respectively by $0$ and $1$ into $[1,\infty)$. With the above choice of $l_{K+1}$ it is easy
to compute that the contribution to the excess energy in $[-(L_0+l_{K+1}),-L_0]$ is of order
$h\sqrt{\tau L}$, while for $y < -(L_0+l_{K+1})$ we have $\xi(x,y) = \sqrt{2}x$, $u_x(x,y)=x$,
so there is absolutely no contribution to the excess energy.
We see that if $L$ is larger than $\sum_{n=1}^K l_n$, the contribution to the excess
energy is at most
\begin{equation}
Ch \sqrt{\tau L} \ln \left( \frac{1}{w_0^2} + 1 \right).
\end{equation}
Hence, together with \eqref{boundA0} we get that
\begin{equation}\label{boundA1}
\min E_h(u,\xi) \le -\frac{1}{12} \tau^2 L^3 +
Ch \sqrt{\tau L} \ln \left( w_0^{-2} \left( \frac{hL}{\sqrt{\tau L}} \wedge 1 \right) + 1 \right).
\end{equation}

If \eqref{ub-hlarger} does not hold, i.e. if $h < w_0^2 \sqrt{\tau L}/L$,
it should be better energetically to just propagate the deformation prescribed at $\Gamma_T$.
In this case we set
\begin{equation}\label{just-propagate}
 u_x(x,y) := u_{x,0}(x) = u_x(x,0), \quad u_y(x,y) := f(y), \quad \xi(x,y) := \xi_0(x) = \xi(x,0),
\end{equation}
and we get the total energy bounded by
\begin{equation}\label{boundA2}
 \min E_h(u,\xi) \le -\frac{1}{12}\tau^2 L^3 + C h^2w_0^{-2}L.
\end{equation}
Finally, since $\ln(1+t) \ge t/4$ for $t \in (0,1)$ and $\frac{hL}{\sqrt{\tau L}} < 1$ follows
from $h < w_0^2 \sqrt{\tau L}/L$, we see that~\eqref{boundA1} and~\eqref{boundA2} can be rephrased as
\begin{equation}\label{boundA}
\min E_h(u,\xi) \le -\frac{1}{12}\tau^2 L^3 +
Ch \sqrt{\tau L} \ln \left( w_0^{-2} \left( \frac{h L}{ \sqrt{\tau L}} \wedge 1 \right) + 1\right).
\end{equation}

\begin{remark} \label{link-to-vandeparre-etal} Our type I deformations are equivalent to
the ones discussed for heavy sheets in \cite{bib-romandrapes}. In particular, our
relation \eqref{ln} is the equivalent in our notation of equation (4) in \cite{bib-romandrapes},
giving the optimal ``length of the wrinklon.'' Since $\{l_n\}$ is a geometric series (with ratio
greater than one) we have $l_0 + \cdots + l_n \sim l_n$, so length scale of the wrinkles at
height $y$ -- call it $w(y)$ -- can be read off from \eqref{ln}: it satisfies
$|y| \sim w^2(y) \sqrt{\tau L} h^{-1}$. Rewriting this as
$\frac{w(y)}{h} \sim (\tau L)^{-1/4} (\frac{|y|}{h})^{1/2}$ we see that it is the equivalent in
our notation of equation (5) in \cite{bib-romandrapes}.
\end{remark}

\subsection{The type II deformation}\label{sectTypeII}

This construction is a modification of the previous one; besides coarsening the wrinkles it also
releases the horizontal confinement of the sheet. Let $(u,\xi)$ be the deformation from the
previous section, constructed using $N$ generations of coarsening for some $N \leq K$.
We choose a particular $n$ such that $1 \le n \le N$. We denote the $y$-coordinate of the
$k$-th building block by
\begin{equation}\nonumber
s_k := \sum_{i=1}^{k-1} l_i.
\end{equation}
For $y \in [-s_{n},0]$ the new deformation $(U,\zeta)$ coincides with $(u,\xi)$:
\begin{equation}\nonumber
\begin{aligned}
U(x,y) &:= u(x,y),\\
\zeta(x,y) &:= \xi(x,y).
\end{aligned}
\end{equation}
For $y \in [-L,-s_n]$ we define
\begin{equation}\nonumber
U_{x}(x,y) := \varphi^2\left( -\frac{s_n + y}{l_n}\right ) u_{x}(x,-s_n),\qquad
\zeta(x,y) := \varphi\left( -\frac{s_n + y}{l_n}\right ) \xi(x,-s_n),
\end{equation}
where $\varphi : [0,\infty) \to [0,1]$ is a smooth decreasing function which
satisfies $|\varphi'| \le 2$ and
\begin{equation}\nonumber
 \varphi(t) = \begin{cases} 1 & t \in (0,1/3), \\  0 & t \in (1,\infty).
              \end{cases}
\end{equation}
Then for $y \le -s_n$ we have
\begin{gather}\nonumber
U_{x,x}(x,y) + \zeta_{,x}^2(x,y)/2 =
\varphi^2\left( -\frac{s_n + y}{l_n}\right ) \cdot \left(u_{x,x}(x,-s_n) + \xi_{,x}^2(x,-s_n) /2\right) = 0.
\end{gather}

So far we did not define $U_y$ for $ \in (-L,-s_n)$. We will do this in two different ways,
thereby obtaining two different upper bounds for the energy. The first way simply sets
\begin{equation}\label{ub2}
U_y(x,y) := u_y(x,y) = f(y).
\end{equation}
For $y \in [-s_n,0]$ the excess energy is estimated by the second term in~\eqref{boundA}
(with $L$ replaced by $s_n$ in the numerator), and so we just need to estimate the excess
energy in the part of the domain $y \le -s_n$. We see
\begin{multline}\label{ub4+}
|U_{x,y} + U_{y,x} + \zeta_{,x}\zeta_{,y}|^2 =
|(2\varphi\varphi')((-s_n - y)/l_n) u_x(x,-s_n) l_n^{-1} \\
 + (\varphi\varphi')((-s_n-y)/l_n)l_n^{-1} \xi_{,x}(x,-s_n) \xi(x,-s_n)|^2 \le Cl_n^{-2},
\end{multline}
where we used
that $|\xi(\cdot,-s_n)|,|\xi_{,x}(\cdot,-s_n)|,|u_x(\cdot,-s_n)|,|\varphi'|,|\varphi| \le C$.
Using~\eqref{lnwn} we see that $|\zeta_{,y}| \le |\varphi'| |\xi| /l_n \le 2w_n / l_n \le 1$.
So \eqref{estf} implies
\begin{equation}\nonumber
|U_{y,y} + \zeta_{,y}^2/2|^2 - |f_{,y}|^2 =
f_{,y} \zeta_{,y}^2 + \zeta_{,y}^4/4 \le \tau L |\zeta_{,y}|^2 =
\tau L |\varphi'|^2l_n^{-2}\xi^2(x,-s_n) \le 4 \tau L l_n^{-2} \xi^2(x,-s_n).
\end{equation}
Since $|\xi(x,-s_n)| \le w_n$ and $l_n = w_n^2 \sqrt{\tau L} h^{-1}$, the previous
inequality implies
\begin{equation}\label{ub4}
|U_{y,y} + \zeta_{,y}^2/2|^2 - |f_{,y}|^2 \le
4 \tau L l_n^{-2} \xi^2(x,-s_n) \le
4\tau L \frac{w_n^2}{l_n} l_n^{-1} = 4 h\sqrt{\tau L} / l_n.
\end{equation}
It remains to estimate the bending energy. For $y \in [-L,s_n]$ we have by~\eqref{lnwn}
\begin{equation}\label{bendingII}
\begin{aligned}
|\zeta_{,xx}(x,y)|^2 &= \varphi^2 |\xi_{,xx}(x,-s_n)|^2 \lesssim
\omega_n^{-2} = \sqrt{\tau L}/(hl_n), \\
|\zeta_{,xy}(x,y)|^2 &= \varphi'^2 |\xi_{,x}(x,-s_n)|^2 l_n^{-2} \lesssim
l_n^{-2} \lesssim w_n^{-2} = \sqrt{\tau L}/(hl_n), \\
|\zeta_{,yy}(x,y)|^2 &= \varphi''^2 |\xi(x,-s_n)|^2 l_n^{-4} \lesssim
w_n^2 l_n^{-4} \lesssim w_n^{-2} = \sqrt{\tau L}/(hl_n),
\end{aligned}
\end{equation}
and so
\begin{equation}\label{ub2bending}
h^2 \int_{\max(-L,-s_{n+1})}^{-s_n} \int_{-1/2}^{1/2} |\nabla ^2 \zeta|^2 \dxy \lesssim
h^2 (s_n - s_{n+1}) \frac{\sqrt{\tau L}}{hl_n} = h\sqrt{\tau L}.
\end{equation}
Now we combine~\eqref{ub2bending} with~\eqref{ub4+} and \eqref{ub4} to obtain
\begin{multline}\label{ub3}
E_h(U,\zeta) \le -\frac{1}{12}\tau^2 L^3 +
C \left( h\sqrt{\tau L} \left[
\log\left( \frac{h s_n}{w_0^2 \sqrt{\tau L}} + 1\right) + 1
\right] + l_n * l_n^{-2} \right) \\
\le -\frac{1}{12}\tau^2 L^3 +
C\left( h\sqrt{\tau L} \log\left( \frac{h l_n}{w_0^2 \sqrt{\tau L}} +1\right) +  l_n^{-1}\right),
\end{multline}
where we used that $s_n$ and $l_n$ are comparable to replace $s_n$ with $l_n$ in the
logarithm, and the fact that $hl_n / (w_0^2 \sqrt{\tau L}) = 9^{n} \ge 1$.

The alternative way to choose $U_y$ will give the different upper bound
\begin{equation}\label{ub5}
E_h(U,\zeta) \le -\frac{1}{12}\tau^2 L^3 +
C \left(  h\sqrt{\tau L} \log\left( \frac{h l_n}{w_0^2 \sqrt{\tau L}}+1\right) + l_n^{-3}\right).
\end{equation}
The essential idea is to choose $U_y$ so that
\begin{equation} \label{Uy2}
U_{x,y} + U_{y,x} + \zeta_{,x}\zeta_{,y} = 0
\end{equation}
for $y \le -s_n$. Whereas our previous choice incurred a substantial energetic cost from the
term $|U_{x,y} + U_{y,x} + \zeta_{,x}\zeta_{,y}|^2$, our alternative choice makes this term
vanish, at the expense of an increase in $|U_{y,y} + \zeta_{,y}^2/2|^2$.

We can assume that $l_n \ge 1$, since otherwise~\eqref{ub5} is worse than~\eqref{ub3}.
To satisfy~\eqref{Uy2} we define
\begin{equation}\label{Uy3}
U_y(x,y) := f(y) - \int_0^x U_{x,y}(s,y) + \zeta_{,x}(s,y)\zeta_{,y}(s,y) \ud s
\end{equation}
for $x \in (-1/2,1/2)$ and $y \le -s_n$. Then
\begin{align}\nonumber
U_{y,y}(x,y) &= f_{,y}(y) -
\left(\varphi^2\left(\frac{-s_n+y}{l_n}\right)\right)_{,yy}
\left(\int_0^x u_x(s,y) + \xi_{,x}(s,y)\xi(s,y)/2 \ud s\right) \\ \nonumber
|U_{y,y}(x,y)| &\le |f_{,y}(y)| +
\left|\left(\varphi^2\left(\frac{-s_n+y}{l_n}\right)\right)_{,yy}\right|
\left|\int_0^x u_x(s,y) + \xi_{,x}(s,y)\xi(s,y)/2 \ud s\right| \\ \nonumber
&\le \tau L/2 + C|(\varphi^2)_{,yy}| \le \tau L/2 + C'l_n^{-2} \lesssim \tau L,
\end{align}
where we used that $l_n \ge 1$ and~\eqref{estf}. Therefore
\begin{multline}\nonumber
\left|U_{y,y}(x,y) + \zeta_{,y}^2(x,y)/2\right|^2 - |f_{,y}|^2 =
\left|U_{y,y}(x,y)\right|^2 - |f_{,y}|^2 + U_{y,y}(x,y)\zeta_{,y}^2(x,y) + \zeta_{,y}^4(x,y)/4 \\
\le \left|U_{y,y}(x,y)\right|^2 - |f_{,y}|^2 + C\tau L \zeta_{,y}^2.
\end{multline}
The contribution to the energy from $C\tau L \zeta_{,y}^2$ can be estimated
as in~\eqref{ub4}; it is at most  $Ch\sqrt{\tau L}$. Since $f$ is the minimizer of the bulk
energy $B$ we have that
\begin{multline}\label{ub6}
\left(\iint_\Omega \left|U_{y,y}\right|^2 + \tau U_y \dxy \right)-
\left( \iint_\Omega |f_{,y}|^2 + \tau f \dx \dy \right) \\
= B(U_y) - B(f) = \frac{1}{2} \left(D^2B(f) (U_y - f), U_y - f\right) =
||U_{y,y} - f_{,y}||_{L^2(\Omega)}^2 \\
\lesssim \iint_\Omega \left| \left( \varphi^2 \left( -\frac{s_n + y}{l_n}\right ) \right)_{,yy} \right|^2 \dxy
\lesssim l_n * l_n^{-4} = l_n^{-3}.
\end{multline}
The estimate for the bending energy~\eqref{ub2bending} remains valid, and so the combination
of~\eqref{ub6} with~\eqref{Uy2} implies~\eqref{ub5}.

\subsection{The type III deformation} \label{sectTypeIII}

The construction in this section is closely related to the discussion at the end of
Section \ref{sectTypeI}, which considered the consequences of ``propagating the
deformation prescribed at $\Gamma_T$'' (see \eqref{just-propagate}).
Here we do something similar, but we release the horizontal confinement by a mechanism similar to that of
Section \ref{sectTypeII}.

Choose any $l \in (w_0,L)$, and consider
\begin{equation}\nonumber
U_{x}(x,y) := \varphi^2\left( -y/l\right) u_{x}(x,0),\qquad U_{y}(x,y) := f(y), \qquad
\zeta(x,y) := \varphi\left( -y/l\right ) \xi(x,0).
\end{equation}
Then $|U_{x,x}(x,y) + \zeta_{,x}^2(x,y)/2|=0$ for all $(x,y)\in \Omega$, and
\begin{gather}\nonumber
\iint_\Omega |U_{x,y} + U_{y,x} + \zeta_{,x} \zeta_{,y}|^2 \dxy
\lesssim l * l^{-2} = l^{-1}, \\ \nonumber
\iint_\Omega |U_{y,y} + \zeta_{,y}^2/2|^2 - |f_{,y}|^2 \dxy
\lesssim \tau L l^{-1} \int_{-1/2}^{1/2} |\xi(x,0)|^2 \dx \lesssim w_0^2 \tau L l^{-1}.
\end{gather}
Since $l \ge w_0$, a calculation similar to~\eqref{bendingII} shows that
$h^2 \iint |\nabla^2 \zeta|^2 \lesssim h^2 w_0^{-2} l$. Combining these estimates gives
\begin{equation}\label{ubspecial1}
E_h(U,\zeta) \le -\frac{1}{12}\tau^2 L^3 + C\left( h^2 w_0^{-2} l + l^{-1} + w_0^2 \tau L l^{-1}\right).
\end{equation}

In Section \ref{sectTypeII} we considered two different ways of extending $U_y$. The preceding calculation
is like the first, but we can also consider the second. Using~\eqref{Uy3} to define $U_y$ and proceeding
as above we find the estimate
\begin{equation}\label{ubspecial2}
E_h(U,\zeta) \le -\frac{1}{12}\tau^2 L^3 + C\left( h^2 w_0^{-2} l + l^{-3} + w_0^2 \tau L l^{-1}\right).
\end{equation}

Since the right-hand side of~\eqref{ubspecial1} as a function of $l \in (0,L)$ attains its minimum
for $l \ge w_0$, we immediately observe that~\eqref{ubspecial1} holds for
all $l \in (0,L)$. Finally, for the same reason~\eqref{ubspecial2} holds for
all $l \in (0,L)$ as well.

Taken together, the upper bounds \eqref{boundA}, \eqref{ub3}, \eqref{ub5}, \eqref{ubspecial1}, and
\eqref{ubspecial2} establish \eqref{ub1}. Thus we have proved the upper bound half of Theorem \ref{thm1}.


\section{The lower bound}\label{sec:lb}

This section proves our lower bound for the minimum energy. As shown in Section~\ref{sec:rescaling},
it suffices to consider $\Delta =1$ and $W=1$. Our task is therefore to show that if~\eqref{tauL}
and~\eqref{wgeh} hold and $(u,\xi)$ satisfies the prescribed boundary condition~\eqref{bdry}, then
the excess energy satisfies a lower bound of the form
\begin{multline} \nonumber
\delta := E_h(u,\xi) - B(f) \ge C_{LB}\min \left(
h \sqrt{\tau L} \log \left( w_0^{-2} \left( \frac{h L}{\sqrt{\tau L}} \wedge 1\right) + 1 \right), \right. \\
\left. \min_{l \in (0,L)} \left\{ h \sqrt{\tau L} \log \left( \frac{h l}{w_0^{2}\sqrt{\tau L}} + 1 \right) +
w_0^2 \tau L l^{-1} + \min \left ( l^{-1}, l^{-3} \right ) \right\} \right).
\end{multline}
Here $f$ is the minimizer of the bulk energy functional $B$, and $C_{LB} > 0$ is a
(sufficiently small) constant that does not depend on the parameters of our problem
(several smallness conditions on $C_{LB}$ will emerge in the course of the proof). We will
argue by contradiction; in fact, our strategy is to assume that
\begin{multline}\label{deltaless}
\delta \le C_{LB} \min\left(
h \sqrt{\tau L} \log \left( \frac{h L}{w_0^2 \sqrt{\tau L}} + 1 \right), \right. \\
\left. \min_{l \in (0,L)} \left\{ h \sqrt{\tau L} \log \left( \frac{h l}{w_0^2 \sqrt{\tau L}} + 1 \right) +
w_0^2 \tau L l^{-1} + \min \left ( l^{-1}, l^{-3} \right ) \right\} \right)
\end{multline}
and to prove using this smallness condition on $\delta$ that our lower bound (the opposite inequality)
must hold.

\begin{remark}\label{smoothness}
We can assume without loss of generality that the deformation $(u,\xi)$ is smooth, since mollification has only
a small effect on the energy. (It is important here that our goal is the scaling law, not the optimal value
of the prefactor $C_{LB}$.)
\end{remark}

\subsection{The idea of the proof} \label{subsec:7-1}

Before beginning the proof in full detail let us sketch the main steps.

\begin{itemize}
\item
Since the sheet is stretched vertically, it prefers not to change its out-of-plane
displacement $\xi$. The situation is similar to a stretched rubber band, whose
preferred configuration is the straight line joining its endpoints (and for which deviation
from this configuration costs additional elastic energy). We also know that when $h=0$
it is optimal to have $u_y = f$, and we expect similar behavior for $h > 0$ (modulo
small adjustments due to wrinkling). We make these ideas quantitative in Lemma~\ref{lm-xi1}.

\item
In some cases we expect the sheet to spread laterally, releasing the horizontal confinement
prescribed at $\Gamma_T$. Mathematically speaking, spreading entails decreasing
the value of $|u_x|$, which is of order $1$ at $y=0$. If the value of $u_x$ is decreased
significantly over a length $\l$, we expect $u_{x,y}$ to be of order $\l^{-1}$.
The term $u_{x,y}$ appears in the energy in $|u_{x,y} + u_{y,x} + \xi_{,x}\xi_{,y}|$.
So we are left with two alternatives: either $u_{y,x}$ and $\xi_{,x}\xi_{,y}$ are negligible, and
$u_{x,y}$ being of order $\l^{-1}$ over a domain of size $\l$ makes the excess energy at least
of order $\l^{-1}$; or else one of the terms $u_{y,x}$ or $\xi_{,x}\xi_{,y}$ is of order $\l^{-1}$.
In the second case other terms in the energy come into play. In Lemma~\ref{dichotomy} we show
that if the sheet releases a non-trivial part of its confinement over length $\l$, then the
excess energy satisfies $\delta \gtrsim \min\left(\l^{-1},\l^{-3}\right)$.

The result just sketched shows that if $\delta \lesssim \min(\l^{-1},\l^{-3})$ then for
$y \in (-\l,0)$ the sheet is ``fairly'' confined. Though the rigorous result (Lemma~\ref{dichotomy})
states this fact in an integral form, a heuristic version of its conclusion is that
$u_x(-1/2,y) \approx 1/2$ and $u_x(1/2,y) \approx -1/2$ for $y \in (-\l,0)$. It follows that
\begin{equation}\nonumber
1 \approx u_x(-1/2,y) - u_x(1/2,y) \le \int_{-1/2}^{1/2} |u_{x,x}(x,y)| \dx \le
\int_{-1/2}^{1/2} |u_{x,x} + \xi_{,x}^2/2| \dx + \int_{-1/2}^{1/2} \xi_{,x}^2/2 \dx,
\end{equation}
so at least one of the terms on the right-hand side must be of order $1$ (see Lemma~\ref{dichotomy3}
for the details).

\item
The type I deformation considered in Section~\ref{sec:ub} involves a coarsening cascade of
wrinkles. It is preferred over our other constructions in a certain (rather large) region
of parameter space. Qualitatively, coarsening is favorable because coarser wrinkles have less
bending energy. Lemma~\ref{lmK} makes this quantitative, by showing that the $L^2$ norm of
the out-of-plane displacement $\xi$ cannot stay uniformly small.

\item
Continuing the discussion of coarsening: the boundary condition at the top makes $\xi(\cdot,0)$
small in $L^2$, while it follows from the previous step that $\xi(\cdot,y_0)$ is large for some
$y_0 \in (-\l,0)$ (if the confinement is not released, and if the excess energy is small). In
Lemma~\ref{mainlemma} we show that there is an energetic cost associated with changing the scale of
the wrinkling; in fact, each doubling of $||\xi(\cdot,y)||_{L^2}$ costs an energy of order
$Ch\sqrt{\tau L}$. This lemma is, roughly speaking, a lower-bound analogue of the
passage from~\eqref{sumenergy} to~\eqref{boundA0}. Its proof builds on an argument from \cite{bib-kohnnguyen-raft}.

\item
The preceding bullets sketch the proof of the lower bound in the regime where self-similar
coarsening is desirable. We also prove lower bounds associated with other
regimes (for example when a type III deformation is best). If the bending term
$\|\xi_{,xx}(\cdot,y)\|_{L^2}$ does not decrease much, we obtain the lower bound
$h^2 w_0^{-2} L$ by simply integrating the bending energy over the domain.
Otherwise we can assume that for some $y_0 \in (-L/2,0)$ the bending term $\int \xi_{,xx}^2(x,y_0) \dx$ is
much smaller than at the beginning ($y=0$). In this case we consider $\eta(x) := \xi(x,y_0) - \xi(x,0)$. We show
that $||\eta_{,x}||_{L^2}$ is of order $1$ while $||\eta_{,xx}||_{L^2}$ is bounded by $Cw_0^{-1}$. These yield
a lower bound for $||\eta||_{L^2}$, obtained as a consequence of the interpolation inequality
$||\eta_{,x}||_{L^2}^2 \le C_{int} \left( ||\eta||_{L^2} ||\eta_{,xx}||_{L^2} +  ||\eta||_{L^2}^2\right)$.
But $||\eta||_{L^2}$ is controlled by $\iint_\Omega \xi_{,y}^2$, which in turn is controlled by the excess
energy. We thus obtain another lower bound on the excess energy.
\end{itemize}

\subsection{The dichotomy}

We start with a lemma relating the excess energy to the out-of-plane displacement $\xi$ and the strain term
$u_{y,y} + \xi_{,y}^2 / 2 - f_{,y}$.

\begin{lemma}\label{lm-xi1}
Let $(u,\xi)$ be any deformation satisfying~\eqref{bdry}, and recall our definition of it excess energy:
\begin{equation} \nonumber
\delta := E_h(u,\xi) - B(f),
\end{equation}
where $f$ is the minimizer of the bulk energy $B$ (see Section~\ref{sec:bulk}).
Under the assumptions of Theorem~\ref{thm1} the excess energy is nonnegative, and
\begin{equation}\label{xiy}
\int_{\Omega} |u_{y,y} + \xi_{,y}^2/2 - f_{,y}|^2 \dxy \le \delta , \qquad
\iint_{\Omega/2} \xi_{,y}^2 \dxy \le 8\delta (\tau L)^{-1},
\end{equation}
\begin{equation}\label{otherterms}
\iint_\Omega \frac{1}{2}|u_{x,y} + u_{y,x} + \xi_{,x}\xi_{,y}|^2 + |u_{x,x} + \xi_{,x}^2/2|^2 +
h^2 |\nabla^2 \xi|^2 \dxy \le \delta,
\end{equation}
where $\Omega / 2 := (-1/2,1/2) \times (-L/2,0)$ is the upper half of the domain.
\end{lemma}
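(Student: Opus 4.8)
The plan is to expand the membrane term into squares, absorb the gravitational term into a one-dimensional bulk functional, and then read off all four inequalities. We may assume $E_h(u,\xi)<\infty$ (otherwise there is nothing to prove) and, by Remark~\ref{smoothness}, that $(u,\xi)$ is smooth. Since $e(u)+\tfrac12\nabla\xi\otimes\nabla\xi$ is a symmetric $2\times 2$ matrix, expanding its squared Frobenius norm gives
\[
E_h(u,\xi)=M(u,\xi)+\iint_\Omega g^2+\tau u_y\dxy,\qquad g:=u_{y,y}+\tfrac12\xi_{,y}^2,
\]
where $M(u,\xi):=\iint_\Omega |u_{x,x}+\tfrac12\xi_{,x}^2|^2+\tfrac12|u_{x,y}+u_{y,x}+\xi_{,x}\xi_{,y}|^2+h^2|\nabla^2\xi|^2\dxy\ge0$ is exactly the left-hand side of~\eqref{otherterms}. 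Hence $\delta=M(u,\xi)+\big(\iint_\Omega g^2+\tau u_y\dxy-B(f)\big)$, and everything reduces to a lower bound for the second term.

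To compare $\iint_\Omega g^2+\tau u_y$ with $B(f)$, I would introduce
\[
v(x,y):=u_y(x,y)+\int_0^y\tfrac12\xi_{,y}^2(x,s)\ud s .
\]
By~\eqref{bdry} we have $u_y(x,0)=0$, hence $v(x,0)=0$ and $v_{,y}=g$; moreover, since $y\le0$ and $\xi_{,y}^2\ge0$, the quantity $u_y-v=\int_y^0\tfrac12\xi_{,y}^2(x,s)\ud s$ is nonnegative, so $\tau u_y\ge\tau v$ pointwise ($\tau>0$) and $\iint_\Omega g^2+\tau u_y\dxy=\iint_\Omega v_{,y}^2+\tau v\dxy+\tau\iint_\Omega(u_y-v)\dxy$. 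For each fixed $x$, the map $y\mapsto v(x,y)$ vanishes at $y=0$, and $f$ is the minimizer of the quadratic one-dimensional functional $\int_{-L}^0 w_{,y}^2+\tau w\ud y$ among such competitors; using $f_{,yy}=\tau/2$ and the natural condition $f_{,y}(-L)=0$, one integration by parts gives the identity $\int_{-L}^0(v_{,y}^2+\tau v)-(f_{,y}^2+\tau f)\ud y=\int_{-L}^0|v_{,y}-f_{,y}|^2\ud y$. Integrating in $x$ over $\Omega$ yields
\[
\iint_\Omega g^2+\tau u_y\dxy-B(f)=\iint_\Omega\big|u_{y,y}+\tfrac12\xi_{,y}^2-f_{,y}\big|^2\dxy+\tau\iint_\Omega(u_y-v)\dxy ,
\]
a sum of two nonnegative terms.

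From this the four claims follow. Nonnegativity of $\delta$ is immediate since $M\ge0$. Dropping $M$ and $\tau\iint_\Omega(u_y-v)$ gives $\delta\ge\iint_\Omega|u_{y,y}+\tfrac12\xi_{,y}^2-f_{,y}|^2$, the first inequality in~\eqref{xiy}; dropping the $g$-terms instead gives $\delta\ge M$, i.e.~\eqref{otherterms}. For the second inequality in~\eqref{xiy} I would evaluate the remainder by Fubini: $\iint_\Omega(u_y-v)\dxy=\int_{-1/2}^{1/2}\int_{-L}^0(s+L)\,\tfrac12\xi_{,y}^2(x,s)\ud s\dx\ge\tfrac{L}{4}\iint_{\Omega/2}\xi_{,y}^2\dxy$, since $s+L\ge L/2$ on $[-L/2,0]$; combined with $\delta\ge\tau\iint_\Omega(u_y-v)$ this gives $\iint_{\Omega/2}\xi_{,y}^2\le 4\delta(\tau L)^{-1}\le 8\delta(\tau L)^{-1}$.

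The one step that deserves care is the substitution $v=u_y+\int_0^y\tfrac12\xi_{,y}^2$: it must be checked that this simultaneously converts $\iint_\Omega g^2+\tau u_y$ into the bulk functional evaluated at $v$ up to the \emph{nonnegative} correction $\tau\iint_\Omega(u_y-v)$, and moves the gravitational term in the direction favorable for a lower bound --- which is precisely where the signs $\tau>0$ and $y\le0$ enter. The remaining ingredients --- the Frobenius expansion, the elementary one-dimensional identity $B(v)-B(f)=\|v_{,y}-f_{,y}\|_{L^2}^2$ for competitors vanishing at $y=0$, and the Fubini weight $s+L$ --- are routine, and the constant $8$ in~\eqref{xiy} is deliberately generous (a factor $4$ would suffice).
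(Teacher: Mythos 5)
Your proof is correct and follows essentially the same path as the paper's: both decompose $\delta$ into the sum of the non-negative membrane/bending remainder $M$, the term $\iint_\Omega |u_{y,y}+\xi_{,y}^2/2-f_{,y}|^2$ (coming from the quadratic structure of $B$ together with $f$ being critical and $u_y(\cdot,0)=f(0)=0$), and the term $\iint_\Omega f_{,y}\xi_{,y}^2\dxy$ (which is exactly your $\tau\iint_\Omega(u_y-v)\dxy$, since $f_{,y}(y)=\tau(y+L)/2$ matches the Fubini weight $\tau(y+L)/2$). The paper reaches the same identity by completing the square in the integrand rather than by introducing the auxiliary function $v$; the two presentations are interchangeable.
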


\begin{proof}
We know that
\begin{equation}\label{Eh}
\begin{aligned}
E_h ( u,\xi )  &= \iint_\Omega |u_{x,x} + \xi_{,x}^2/2|^2 + \oh|u_{x,y}+u_{y,x}+\xi_{,x}\xi_{,y}|^2 +
|u_{y,y} + \xi_{,y}^2/2|^2 + h^2 | \nabla^2 \xi |^2 \dxy \\
&\quad + \tau \iint_\Omega u_y \dxy \\
&\ge \iint_\Omega |u_{y,y} + \xi_{,y}^2/2|^2 + \tau u_y \dxy =
B(u_y) + \iint_\Omega u_{y,y}\xi_{y}^2 + \xi_{,y}^4/4 \dxy.
\end{aligned}
\end{equation}
Since $f$ is the minimizer of $B$ (in particular it is a critical point of $B$), using the previous
relation we get
\begin{equation}\label{delta}
\begin{aligned}
\delta & =E_h(u,\xi) - B(f) \ge
\frac{1}{2} \left( D^2B \cdot (u_y - f), u_y-f \right) + \iint_\Omega u_{y,y}\xi_{,y}^2 + \xi_{,y}^4/4 \dxy \\
&= \iint_\Omega (u_{y,y} - f_{,y})^2 + u_{y,y}\xi_{,y}^2 + \xi_{,y}^4/4 \dxy =
\iint_\Omega (u_{y,y} - f_{,y} + \xi_{,y}^2/2)^2 + f_{,y}\xi_{,y}^2 \dxy \ge 0.
\end{aligned}
\end{equation}
We see that if $E_h(u,\xi)$ is close to $B(f)$ then $u_{y,y} + \xi_{,y}^2/2$ is close to
$f_{,y}$ and $\xi_{,y}^2$ is bounded. More quantitatively, we have
\begin{align*}
\int_{\Omega} | u_{y,y} + \xi_{,y}^2/2 - f_{,y} |^2 \dxy &\le \delta , \\
\frac{\tau L}{8} \iint_{\Omega/2} \xi_{,y}^2 \dxy \le \iint_{\Omega} f_{,y}\xi_{,y}^2/2 &\le \delta,
\end{align*}
where we have used that $f_{,y} \geq 0$ for $y \in (-L,0)$ and $f_{,y}(y) \ge \tau L/4$ for $y \in (-L/2,0)$
(see~\eqref{estf}). Finally, we observe that in~\eqref{delta} we have used only one of the four
non-negative terms in $E_h$, so \eqref{otherterms} follows.
\end{proof}

We must consider the possibility that the sheet spreads laterally, releasing its confinement (at least in part).
In the following lemma we estimate the energy needed to do that.



\begin{lemma}
\label{dichotomy}
Under the assumptions of Lemma~\ref{lm-xi1} there exist $-1/2 < x_0 < -7/16$ and $-3/8 < x_1 < -5/16$,
and a universal constant $C_0 > 0$, such that either
\begin{enumerate}
\item[i)] there exists $l_1 \in (0,L/2)$ such that
\begin{equation} \nonumber \delta \max (l_1, l_1^3) \ge C_0 \end{equation}
and
\begin{equation}\label{kappa}
\left| \int_{x_0}^{x_1} u_x(x,y) - u_x(x,0) \dx \right| \le 1/512
\end{equation}
for all $y \in (-l_1,0)$;
\item[ii)] or~\eqref{kappa} holds for all $y \in (-L/2,0)$ (we set $l_1:= L/2$ in this case).
\end{enumerate}
\end{lemma}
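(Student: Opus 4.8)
The plan is to monitor how much the sheet has relaxed near its left edge by tracking the scalar function
\[
\kappa(y):=\int_{x_0}^{x_1}\bigl(u_x(x,y)-u_x(x,0)\bigr)\dx ,\qquad \kappa(0)=0,
\]
and to show that $\kappa$ cannot reach the value $1/512$ within a short distance of the top without forcing a large excess energy. The constants $x_0\in(-\tfrac12,-\tfrac7{16})$ and $x_1\in(-\tfrac38,-\tfrac5{16})$ are not special beyond lying in disjoint intervals of length $\tfrac1{16}$ with $x_1-x_0\in(\tfrac1{16},\tfrac3{16})$; I would choose them by averaging the $L^2$ bounds of Lemma~\ref{lm-xi1} over those intervals and applying Chebyshev, so that the restrictions of $\xi_{,y}^2$, of $u_{y,y}+\xi_{,y}^2/2-f_{,y}$, and of $|\nabla^2\xi|^2$ to the vertical lines $x=x_i$ ($i=0,1$) are each controlled by a universal multiple of the corresponding global quantity; in particular $\int_{-L/2}^0\xi_{,y}^2(x_i,s)\,\ud s\lesssim \delta/(\tau L)$. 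Together with the prescribed data $\xi(x,0)=\tfrac{w_0}{\pi}\sin(2\pi x/w_0)$ and $u_y(x,0)=0$ from~\eqref{bdry} (recall $W=\tfrac12$, $\Delta=1$ after rescaling), this yields the auxiliary estimate $|\xi(x_i,y)|\lesssim w_0+(l_1\delta/(\tau L))^{1/2}$ for $y\in(-l_1,0)$, which I will use repeatedly; note also that $\int_{x_0}^{x_1}u_x(x,0)\dx$ is an explicit positive universal constant.

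Next I would differentiate and split the integrand using the pointwise identity $u_{x,y}=(u_{x,y}+u_{y,x}+\xi_{,x}\xi_{,y})-u_{y,x}-\xi_{,x}\xi_{,y}$, so that
\[
\kappa'(y)=\underbrace{\int_{x_0}^{x_1}(u_{x,y}+u_{y,x}+\xi_{,x}\xi_{,y})\dx}_{A(y)}\;-\;\underbrace{\bigl(u_y(x_1,y)-u_y(x_0,y)\bigr)}_{B(y)}\;-\;\underbrace{\int_{x_0}^{x_1}\xi_{,x}\xi_{,y}\dx}_{D(y)} .
\]
For $A$, Cauchy--Schwarz in $x$ and then in $y$ together with~\eqref{otherterms} give $\int_{-l_1}^0|A|\dy\lesssim (\delta l_1)^{1/2}$. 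For $B$, I write $u_y(x,y)=\int_0^y u_{y,y}=f(y)-\oh\int_0^y\xi_{,y}^2+\int_0^y e$ with $e:=u_{y,y}+\xi_{,y}^2/2-f_{,y}$ (using $u_y(\cdot,0)=0$); the $f$-term cancels in the difference $u_y(x_1,\cdot)-u_y(x_0,\cdot)$, the $\xi_{,y}^2$-term is handled by the trace bounds on $\xi_{,y}$ and $\tau L\ge 4$ (see~\eqref{estf}), and the $e$-term by Cauchy--Schwarz, giving $\int_{-l_1}^0|B|\dy\lesssim l_1\delta/(\tau L)+(\delta l_1^3)^{1/2}$ — the cubic power of $l_1$ here is exactly what produces the $l_1^{-3}$ alternative in the conclusion.

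The term $D$ is the crux. The excess energy controls $\xi_{,x}\xi_{,y}$ only through the shear strain $u_{x,y}+u_{y,x}+\xi_{,x}\xi_{,y}$, which has already been spent on $A$, so one needs an independent handle on $\xi_{,x}$ on the strip near the left edge — and neither $\|\xi_{,x}(\cdot,y)\|_{L^\infty}$ nor $\|\xi_{,x}(\cdot,y)\|_{L^2}$ is bounded by $\delta$ directly. I would attack it by first integrating by parts, $\xi_{,x}\xi_{,y}=\partial_y(\xi_{,x}\xi)-\xi_{,xy}\xi$, so that $\int_{-l_1}^0 D\dy=\int_{x_0}^{x_1}[\xi_{,x}\xi]_{y=-l_1}^{y=0}\dx-\iint_{(-l_1,0)\times(x_0,x_1)}\xi_{,xy}\,\xi$; the boundary term equals $\oh(\xi^2(x_1,\cdot)-\xi^2(x_0,\cdot))$ evaluated at $y=0$ and $y=-l_1$ and is $O(w_0^2+l_1\delta/(\tau L))$ by the auxiliary estimate. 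The interior term I would bound by combining the bending bound $\iint|\nabla^2\xi|^2\le\delta/h^2$, the smallness of $\|\xi(\cdot,y)\|_{L^2}\lesssim w_0$, an interpolation inequality $\|\xi_{,x}\|^2\lesssim\|\xi\|\,\|\xi_{,xx}\|+\|\xi\|^2$, and — crucially — the standing hypothesis that $\kappa$ is still small on $(-l_1,0)$: via the near-saturation of the membrane relation $u_{x,x}\approx-\xi_{,x}^2/2$ from~\eqref{otherterms} this keeps $u_x$ near $x=-\tfrac12$ close to its prescribed value, which controls $\int_{x_0}^{x_1}\xi_{,x}^2(x,y)\dx$ through the identity $\int_{x_0}^{x_1}\xi_{,x}^2=2\int_{x_0}^{x_1}(u_{x,x}+\xi_{,x}^2/2)-2(u_x(x_1,y)-u_x(x_0,y))$ and a Poincaré-type inequality. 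I expect this step — making every error term small by an amount governed only by a power of the adjustable constant $C_0$ — to be the bulk of the work.

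Finally I would close the dichotomy. By Remark~\ref{smoothness} we may assume $(u,\xi)$ smooth; set $l_1:=\inf\{t\in(0,L/2):|\kappa(-t)|=1/512\}$ if this set is nonempty. Then $l_1\in(0,L/2)$, \eqref{kappa} holds on $(-l_1,0)$, and $|\kappa(-l_1)|=1/512$ by continuity, so
\[
\tfrac1{512}=|\kappa(-l_1)|\le\int_{-l_1}^0|\kappa'|\dy\le\int_{-l_1}^0\bigl(|A|+|B|+|D|\bigr)\dy\le C\bigl((\delta l_1)^{1/2}+(\delta l_1^3)^{1/2}\bigr)+\text{(small)} .
\]
Arguing by contradiction — suppose $\delta\max(l_1,l_1^3)<C_0$, i.e.\ $\delta<C_0\min(l_1^{-1},l_1^{-3})$ — makes both $(\delta l_1)^{1/2}$ and $(\delta l_1^3)^{1/2}$, together with all the leftover error terms, bounded by a fixed power of $C_0$, so that for $C_0$ small enough the displayed inequality is violated; this proves $\delta\max(l_1,l_1^3)\ge C_0$, which is alternative~(i). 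If instead $|\kappa(-t)|<1/512$ for every $t\in(0,L/2)$, then \eqref{kappa} holds on all of $(-L/2,0)$, which is alternative~(ii) with $l_1:=L/2$. The main obstacle throughout is the $D$-term, i.e.\ controlling $\xi_{,x}$ (hence $\xi_{,x}\xi_{,y}$) on the relaxation strip: the energy does not bound it, and the bound one extracts from the membrane constraint plus interpolation plus the bending energy must be carefully balanced so that, under the contradiction hypothesis, it contributes only an $O(\text{power of }C_0)$ error.
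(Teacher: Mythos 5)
Your overall strategy -- tracking $\kappa(y)$, choosing $x_0,x_1$ by a Chebyshev argument, splitting $u_{x,y}$ into the shear strain, $u_{y,x}$, and $\xi_{,x}\xi_{,y}$, and closing by showing that each contribution is controlled by a function of $\delta\max(l_1,l_1^3)$ once the contradiction hypothesis \eqref{deltaless} is in force -- matches the paper's proof, and your treatment of the $A$ and $B$ pieces is essentially correct. But there is a genuine gap in your treatment of the $D$ term, and since you yourself identify it as ``the crux,'' it matters.

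Integrating by parts to turn $\iint_{\Omega'}\xi_{,x}\xi_{,y}$ into $\iint_{\Omega'}\xi_{,xy}\xi$ plus a boundary term is a detour that makes the estimate \emph{worse}, not better. Bounding $|\iint\xi_{,xy}\xi|$ by $\|\xi_{,xy}\|_{L^2}\|\xi\|_{L^2}$ with $\|\xi_{,xy}\|_{L^2}\le(\delta/h^2)^{1/2}$ and $\|\xi\|^2_{L^2(\Omega')}\lesssim l_1(w_0^2+\delta l_1/(\tau L))$ produces a term of the form $\delta\,l_1\,h^{-1}(\tau L)^{-1/2}$. To absorb this via the contradiction hypothesis you would need $\delta\lesssim h^{3/2}(\tau L)^{3/4}$, but \eqref{deltaless} permits $\delta$ as large as (a multiple of) $h\sqrt{\tau L}$, and $h\sqrt{\tau L}\gg h^{3/2}(\tau L)^{3/4}$ for small $h$. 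So this route loses a factor of $h^{-1/2}$ that cannot be recovered. Your alternate route -- bounding $\int_{x_0}^{x_1}\xi_{,x}^2(\cdot,y)$ via the identity $\int\xi_{,x}^2 = 2\int(u_{x,x}+\xi_{,x}^2/2)-2(u_x(x_1,y)-u_x(x_0,y))$ together with the $\kappa$-smallness and a Poincar\'e inequality -- is circular: the $\kappa$-hypothesis controls $\int_{x_0}^{x_1}(u_x(\cdot,y)-u_x(\cdot,0))$, i.e.\ the \emph{average} of $u_x$, not the endpoint difference $u_x(x_1,y)-u_x(x_0,y)$; and the latter is itself $\int_{x_0}^{x_1}u_{x,x}$, which brings back $\int\xi_{,x}^2$.

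The fix is simpler than what you propose: do not integrate by parts. Bound $|\iint_{\Omega'}\xi_{,x}\xi_{,y}|\le\|\xi_{,x}\|_{L^2(\Omega')}\|\xi_{,y}\|_{L^2(\Omega')}$ directly, and estimate $\|\xi_{,x}(\cdot,y)\|_{L^2}^2$ via the interpolation inequality
\[
\|\xi_{,x}\|_{L^2}^2\lesssim \|\xi\|_{L^2}\|\xi_{,xx}\|_{L^2}+\|\xi\|_{L^2}^2,
\]
where $\|\xi(\cdot,y)\|_{L^2}^2\lesssim \delta l_1(\tau L)^{-1}+w_0^2$ comes from integrating $\xi_{,y}$ and the boundary condition, and $\iint\xi_{,xx}^2\le\delta/h^2$ from \eqref{otherterms}. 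Squaring the interpolation inequality and integrating in $y$ puts only \emph{half} a power of the bending energy in the bound, so the resulting terms carry $h^{-1/2}$ (not $h^{-1}$), and those are exactly the ones that can be absorbed using \eqref{pp1} with the clever choices $l=h^{-3/5}(\tau L)^{1/10}$ and $l=h^{-1}$. You already mention the interpolation inequality, so you have the right tool in hand -- you just apply it at the wrong spot, after an integration by parts that was never needed.

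One smaller point: the boundary term that your integration by parts produces includes a piece at $y=0$ that is a definite multiple of $w_0^2$, independent of $\delta$ and $l_1$. This can be absorbed only by an extra smallness requirement on $\cw$, which the paper's direct argument avoids (all of its error terms vanish with $\delta$). It is not fatal, but it is another indication that the integration by parts is the wrong move here.
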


Before proving the lemma let us sketch the main ingredients in its proof. First, we use the bounds
from Lemma~\ref{lm-xi1} to show that $u_{y}$ is in average close to $f$. In particular,
$\int |u_y(x,y) - f(y)| \dy$ is small for generic $x$, and we can choose such generic
$x_0$ and $x_1$ near $x=-1/2$ such that the size of the interval $(x_0,x_1)$ is of order $1$.

To get \eqref{kappa} it is enough to estimate $u_{x,y}$ since
$\int_{x_0}^{x_1} u_x(x,y_0) - u_x(x,0) \dx = \int_{x_0}^{x_1} \int_{y_0}^0 u_{x,y} \dxy$. We write
$u_{x,y} = (u_{x,y} + u_{y,x} + \xi_{,x}\xi_{,y}) - u_{y,x} - \xi_{,x}\xi_{,y}$. The first term on
the right-hand side can be estimated using the excess energy $\delta$ (see \eqref{otherterms}).
We integrate the second term in $x$ and use that by the choice of $x_0,x_1$ both $u_y(x_0,\cdot)$
and $u_y(x_1,\cdot)$ are close to $f$; it follows by the triangle inequality that $u_y(x_0,\cdot)$ is
close to $u_y(x_1,\cdot)$.

It remains to estimate $\xi_{,x}\xi_{,y}$. First, we write $\xi(x,y) = \int \xi_{,y}(x,\cdot) + \xi(x,0)$
and use Lemma~\ref{lm-xi1}. Then using interpolation with $\xi_{,xx}$ we obtain enough control
over $\xi_{,x}$. In the end we obtain an estimate for
$\left| \int_{x_0}^{x_1} u_x(x,y) - u_x(x,0) \dx \right|$ in terms of a function of
$\delta \max(l_1,l_1^3)$, which concludes the proof.

In proving Lemma~\ref{dichotomy}, and in many other places in this section, we will use the basic
interpolation inequality
\begin{equation}\label{interpolation}
||\varphi'||_{L^2(I)}^2 \le
C_{int}\left( ||\varphi||_{L^2(I)}||\varphi''||_{L^2(I)} + \frac{1}{|I|^{2}} ||\varphi||_{L^2(I)}^2\right),
\end{equation}
which holds for any interval $I \subset \mathbb{R}$ and any $\varphi \in W^{2,2}(I)$
with a fixed universal constant $C_{int}$.

\begin{proof}[Proof of Lemma \ref{dichotomy}]
To start we need to choose $x_0$ and $x_1$ generically. By~\eqref{xiy} we have
\begin{gather*}
\iint_\Omega |u_{y,y} + \xi_{,y}^2/2 - f_{,y}|^2 \dx \dy \le \delta,\\
\iint_{\Omega/2} \xi_{,y}^2 /2 \dx \dy \le 4 \delta (\tau L)^{-1},
\end{gather*}
so we can choose $-1/2 < x_0 < -7/16$ and $-3/8 < x_1 < -5/16$ such that
\begin{equation}\label{x0x1}
\int_{-L}^0 |u_{y,y}(x_i,y) + \xi_{,y}^2(x_i,y)/2 - f_{,y}(y)|^2 \dy \lesssim \delta, \qquad
\int_{-L/2}^0 \xi_{,y}^2(x_i,y) /2 \dy \lesssim \delta (\tau L)^{-1}, \qquad i = 0,1.
\end{equation}

Now suppose \eqref{kappa} is not true for all $y \in (-L/2,0)$; then there exists
a smallest $l_1 \in (0,L/2)$ such that
\begin{equation} \nonumber
\left| \int_{x_0}^{x_1} u_x(x,-l_1) - u_x(x,0) \dx \right| \ge 1/512 .
\end{equation}
By the boundary condition~\eqref{bdry} we have $u_y(\cdot,0) = 0$. So~\eqref{x0x1} implies that
for any $y \in (-l_1,0)$ and $i=0,1$:
\begin{equation}\nonumber 
\begin{aligned}
|u_y(x_i,y) - f(y)| &\le \int_{y}^0 |u_{y,y}(x_i,y) - f_{,y}(y)| \dy \\
&\le \int_{y}^0 |u_{y,y}(x_i,y) + \xi_{,y}^2(x_i,y)/2 - f_{,y}(y)| \dy +
\int_{y}^0 \xi_{,y}^2(x_i,y)/2 \dy \\
&\overset{\textrm{H\"older}}{\le}
l_1^{1/2} \left(\int_{-L}^0 |u_{y,y}(x_i,y) + \xi_{,y}^2(x_i,y)/2 - f_{,y}(y)|^2 \dy\right)^{1/2} +
\int_{y}^0 \xi_{,y}^2(x_i,y)/2 \dy \\
&\overset{\eqref{x0x1}}{\lesssim} \delta^{1/2} l_1^{1/2} + \delta (\tau L)^{-1}.
\end{aligned}
\end{equation}
We integrate the previous relation in $y$ and use $\tau L \ge 4$ (see \eqref{tauL}) to obtain
\begin{equation}\label{uy3}
 \int_{-l_1}^0 |u_y(x_i,y) - f(y)| \dy \lesssim \delta^{1/2}l_1^{3/2} + \delta l_1, \quad i=0,1.
\end{equation}
We define $\Omega' := [x_0,x_1] \times [-l_1,0]$. Then
\begin{multline}\label{ux}
\left| \int_{x_0}^{x_1} u_x(x,-l_1) - u_x(x,0) \dx \right| =
\left| \iint_{\Omega'} u_{x,y} \dxy \right| \\
\le \iint_{\Omega'} |u_{x,y} + u_{y,x} + \xi_{,x} \xi_{,y}| \dxy  +
\left| \iint_{\Omega'} u_{y,x} \dxy\right| + \left| \iint_{\Omega'} \xi_{,x} \xi_{,y} \dxy\right|.
\end{multline}
By~\eqref{otherterms} the first integral on the right-hand side satisfies
\begin{equation}\label{est1}
\iint_{\Omega'} |u_{x,y} + u_{y,x} + \xi_{,x} \xi_{,y}| \dxy \overset{\textrm{H\"older}}{\le}
|\Omega'|^{1/2} \left( \iint_{\Omega'} |u_{x,y} + u_{y,x} + \xi_{,x} \xi_{,y}|^2 \dxy \right)^{1/2} \lesssim
\delta^{1/2} l_1^{1/2}.
\end{equation}
For the second integral we have
\begin{multline}\label{est2}
\left| \iint_{\Omega'} u_{y,x} \dxy\right|  = \left| \int_{-l_1}^0 u_{y}(x_1,y) - u_y(x_0,y) \dy \right| \\
\le \int_{-l_1}^0 \left| u_{y}(x_1,y) - f(y)\right| \dy + \int_{-l_1}^0 \left| u_{y}(x_0,y) - f(y)\right| \dy
\lesssim \delta^{1/2} l_1^{3/2} + \delta l_1,
\end{multline}
where the last inequality follows from~\eqref{uy3}. It remains to estimate the last term in~\eqref{ux}:
\begin{equation} \nonumber
\left| \iint_{\Omega'} \xi_{,x}\xi_{,y} \dxy \right|.
\end{equation}
Since we already have bound on $\xi_{,y}$ (see~\eqref{xiy}) it remains to estimate $\xi_{,x}$.
To do this, we first show a bound on $\xi$ by integrating $\xi_{,y}$, then we use interpolation
with $\xi_{,xx}$. For any $y \in [-l_1,0]$ we have
\begin{equation}\nonumber
\begin{aligned}
\int_{x_0}^{x_1} |\xi(x,y)|^2 \dx &=
\int_{x_0}^{x_1} \left| \int_{y}^0 \xi_{,y}(x,t) \ud t + \xi(x,0)\right|^2 \dx \\
&\le 2\int_{x_0}^{x_1} \left| \int_{y}^0 \xi_{,y}(x,t) \ud t \right|^2 \dx +
2\int_{x_0}^{x_1} |\xi(x,0)|^2 \dx \\
&\le 2|y|\int_{x_0}^{x_1} \int_{y}^0 |\xi_{,y}(x,t)|^2 \ud t \dx +
2\int_{x_0}^{x_1} |\xi(x,0)|^2 \dx \lesssim
\delta l_1 (\tau L)^{-1} + w_0^2,
\end{aligned}
\end{equation}
where the last inequality follows from~\eqref{bdry} and~\eqref{xiy}. We use the interpolation
inequality~\eqref{interpolation} for $\xi$ to get
\begin{align*}
\left(\int_{x_0}^{x_1} |\xi_{,x}(x,y)|^2 \dx\right)^2 &\lesssim
\left(\int_{x_0}^{x_1} |\xi(x,y)|^2 \dx\right) \left(\int_{x_0}^{x_1} |\xi_{,xx}(x,y)|^2 \dx\right) \\
&\quad + |x_1-x_0|^{-4} \left(\int_{x_0}^{x_1} |\xi(x,y)|^2 \dx\right)^2 \\
&\lesssim (\delta l_1 (\tau L)^{-1} + w_0^2) \int_{x_0}^{x_1} |\xi_{,xx}(x,y)|^2 \dx +
(\delta l_1 (\tau L)^{-1} + w_0^2)^2.
\end{align*}
After integration in $y$ we obtain
\begin{multline} \nonumber
\left( \iint_{\Omega'} |\xi_{,x}(x,y)|^2 \dx \dy\right)^2 \le
l_1 \int_{y_0}^0 \left(\int_{x_0}^{x_1} |\xi_{,x}(x,y)|^2 \dx\right)^2 \\
\lesssim l_1 (\delta l_1 (\tau L)^{-1} + w_0^2)
\iint_{\Omega'} |\xi_{,xx}(x,y)|^2 \dxy + l_1 (\delta l_1 (\tau L)^{-1} + w_0^2)^2 \\
\le l_1 (\delta l_1 (\tau L)^{-1} + w_0^2) \delta h^{-2} + l_1 (\delta l_1 (\tau L)^{-1} + w_0^2)^2,
\end{multline}
where the last inequality follows from~\eqref{otherterms}. This combined with~\eqref{xiy} implies
\begin{multline}\label{est3}
\left| \iint_{\Omega'} \xi_{,x}\xi_{,y} \dxy\right| \le
||\xi_{,x}||_{L^2(\Omega')} ||\xi_{,y}||_{L^2(\Omega')} \\
\lesssim \sqrt[4]{ \left( \delta l_1^2 (\tau L)^{-1} + w_0^2 l_1\right) \delta h^{-2} +
l_1 (\delta l_1 (\tau L)^{-1} + w_0^2)^2} \sqrt{ \delta (\tau L)^{-1} }.
\end{multline}
Combination of~\eqref{est1}, \eqref{est2}, and \eqref{est3} with~\eqref{ux} gives
\begin{multline}\label{uxbdry}
\left| \int_{x_0}^{x_1} u_x(x,-l_1) - u_x(x,0) \dx \right| \\
\lesssim \left( \delta^{1/2} l_1^{1/2} + \delta^{1/2} l_1^{3/2} + \delta l_1  \right) \\
+ \delta l_1^{1/2} h^{-1/2} (\tau L)^{-3/4} + w_0^{1/2} \delta^{3/4} l_1^{1/4} h^{-1/2} (\tau L)^{-1/2} +
\delta l_1^{3/4} (\tau L)^{-1} + w_0 \delta^{1/2} l_1^{1/4} (\tau L)^{-1/2}.
\end{multline}

We want to estimate the four terms on the last line of~\eqref{uxbdry} in terms of $\delta l_1$ and $\delta l_1^3$.
By~\eqref{deltaless} we know
\begin{equation}\label{pp1}
\delta \le C_{LB}
\left( h \sqrt{\tau L} \log \left( \frac{h l}{w_0^2 \sqrt{\tau L}} + 1 \right) + w_0^2 \tau L l^{-1} + l^{-1}\right)
\end{equation}
for any $l \in (0,L)$. It is easy to observe that for $l \ge L$ the relation~\eqref{pp1} holds as well since
the right-hand side of~\eqref{pp1} is then larger than
$C_{LB} h \sqrt{\tau L} \log \left( \frac{h L}{w_0^2 \sqrt{\tau L}} + 1 \right)$, which is in turn larger
than $\delta$ by~\eqref{deltaless}. We now estimate the last line of~\eqref{uxbdry} by making three separate
applications of~\eqref{pp1}:
\begin{enumerate}
\item[(1)]
Taking $l := h^{-3/5} (\tau L)^{1/10}$ in~\eqref{pp1} and using that $\log(1+t) \le 2t^{1/4}$ for $t \ge 0$
we obtain
\begin{equation}\label{12-pom1}
\delta \le C_{LB} \left(
2h^{3/5} \left(\frac{h}{w_0}\right)^{1/2} (\tau L)^{2/5} + w_0^2 h^{3/5} (\tau L)^{9/10} + h^{3/5}(\tau L)^{-1/10}
\right).
\end{equation}
Since the first term in the last line of~\eqref{uxbdry} can be written as
\begin{equation} \nonumber
\delta l_1^{1/2} h^{-1/2} (\tau L)^{-3/4} =
(\delta l_1^3)^{1/6} \left( \delta h^{-3/5} (\tau L)^{-9/10} \right)^{5/6},
\end{equation}
we can control it by estimating $\delta h^{-3/5} (\tau L)^{-9/10} $.
Using $h/w_0 \le 1$ (see~\eqref{wgeh}), $w_0 \le 1$, and $\tau L \ge 4$ (see \eqref{tauL}), we obtain
from~\eqref{12-pom1} that
\begin{equation} \nonumber
\delta h^{-3/5} (\tau L)^{-9/10} \le C_{LB}\left( 2(\tau L)^{-5/10} + w_0^2 + (\tau L)^{-1} \right) \lesssim 1,
\end{equation}
which implies
\begin{equation}\label{12-ing3}
\delta l_1^{1/2} h^{-1/2} (\tau L)^{-3/4} \lesssim (\delta l_1^3)^{1/6}.
\end{equation}
(The implicit constant in~\eqref{12-ing3} involves a positive power of $C_{LB}$. But we may (and shall)
assume that $C_{LB} \leq 1$; then~\eqref{12-ing3} holds with an implicit constant that is independent of $C_{LB}$.)

\item[(2)]
For the next term we use~\eqref{pp1} with $l := h^{-1}$. Using $\log(1+t) \le t^{1/2}$ for $t \ge 0$
it follows from~\eqref{pp1} that
\begin{equation} \nonumber
\delta \le C_{LB}\left( h w_0^{-1} (\tau L)^{1/4} + w_0^2 \tau L h + h\right),
 \end{equation}
so we have
\begin{multline}\label{12-ing4}
w_0^{1/2} \delta^{3/4} l_1^{1/4} h^{-1/2} (\tau L)^{-1/2} \le
(\delta l_1)^{1/4} \left( \delta w_0 h^{-1} (\tau L)^{-1} \right)^{1/2} \\
\le C_{LB}^{1/2} (\delta l_1)^{1/4} \left( (\tau L)^{-3/4} + w_0^3 + w_0 (\tau L)^{-1} \right)^{1/2}
\lesssim (\delta l_1)^{1/4}.
\end{multline}

\item[(3)]
The last two terms are fairly easy to estimate. First, we observe that $\delta (\tau L)^{-1} \lesssim 1$.
Indeed, choosing $l := 1$ in~\eqref{pp1} gives
\begin{equation}\nonumber
\delta \lesssim \left( h\sqrt{\tau L} \log\left( \frac{h}{w_0^2 \sqrt{\tau L}} + 1\right) +
w_0^2 \tau L + 1\right) \le 2 + w_0^2 \tau L,
\end{equation}
where we used $\log(1+t) \le t$ and $h \le w_0$. Since $\tau L \ge 4$ and $w_0 \le \cw$
(see \eqref{tauL} and~\eqref{wgeh}), we have $\delta (\tau L)^{-1} \lesssim 1 + w_0^2 \lesssim 1$. Using this
estimate for $\delta$, the last two terms are estimated as follows:
\begin{gather*}
\delta l_1^{3/4} (\tau L)^{-1} =
(\delta l_1)^{3/4} (\delta (\tau L)^{-1})^{1/4} (\tau L)^{-3/4} \lesssim (\delta l_1)^{3/4},\\
w_0 \delta^{1/2} l_1^{1/4} (\tau L)^{-1/2} =
w_0 (\delta l_1)^{1/4} (\delta (\tau L)^{-1})^{1/4} (\tau L)^{-1/4} \lesssim (\delta l_1)^{1/4},
\end{gather*}
where we have used~\eqref{tauL} and~\eqref{wgeh}.
\end{enumerate}
Combining~\eqref{12-ing3} and~\eqref{12-ing4} with~\eqref{uxbdry} and the last estimate we get that
\begin{equation} \nonumber
\left| \int_{x_0}^{x_1} u_x(x,-l_1) - u_x(x,0) \dx \right| \lesssim
(\delta l_1)^{1/2} + (\delta l_1^3)^{1/2} + \delta l_1+ (\delta l_1^3)^{1/6} +
(\delta l_1)^{1/4} + (\delta l_1)^{3/4}.
\end{equation}
By our choice of $l_1$ we see that
\begin{equation} \nonumber
1/512 \le \left| \int_{x_0}^{x_1} u_x(x,-l_1) - u_x(x,0) \dx \right| \lesssim F(\delta \max (l_1,l_1^3)),
\end{equation}
where $F(t) = t^{1/6} + t^{1/4} + t^{1/2} + t^{3/4} + t$. It follows that
\begin{equation} \nonumber
\delta \max (l_1,l_1^3) \ge C_0
\end{equation}
for a universal constant $C_0 > 0$.
\end{proof}


\begin{remark}\label{dichotomy2}
By symmetry we can find $5/16 < x_2 < 3/8$ and $7/16 < x_3 < 1/2$ for which either
\begin{enumerate}
 \item[i)] there exists $l_2 \in (0,L/2]$ such that
\begin{equation} \nonumber \delta \max (l_2, l_2^3) \ge C_0 \end{equation}
and
\begin{equation}\label{kappa2}
 \left| \int_{x_2}^{x_3} u_x(x,y) - u_x(x,0) \dx \right| \le 1/512,
\end{equation}
holds for all $y \in (-l_2,0)$;
\item[ii)] or~\eqref{kappa2} holds for all $y \in (-L/2,0)$ (we set $l_2 := L/2$ in this case).
\end{enumerate}
\end{remark}
We use this remark together with Lemma~\ref{dichotomy} to get
\begin{cor}\label{dich}
There exist $-1/2 < x_0 < -7/16$, $-3/8 < x_1 < -5/16$,  $5/16 < x_2 < 3/8$,
$ 7/16 < x_3 < 1/2$, and a universal constant $C_0$, such that either
\begin{enumerate}
\item[i)] for any $y \in (-L/2,0)$
\begin{equation}\label{kappa3}
\begin{aligned}
\left| \int_{x_0}^{x_1} u_x(x,y) - u_x(x,0) \dx \right| \le 1/512,\\
\left| \int_{x_2}^{x_3} u_x(x,y) - u_x(x,0) \dx \right| \le 1/512,
\end{aligned}
\end{equation}
\item[ii)] or there exists $\l \in (0,L/2]$ such that
\begin{equation} \nonumber \delta \max (\l, \l^3) \ge C_0 \end{equation}
and~\eqref{kappa3} holds for all $y \in (-\l,0)$.
\end{enumerate}
When the first alternative holds, we take the convention that $\l = L/2$.
\end{cor}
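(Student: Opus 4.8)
The plan is to obtain Corollary~\ref{dich} from Lemma~\ref{dichotomy} and its mirror image Remark~\ref{dichotomy2} by a short case analysis, with no new estimates required. First I would apply Lemma~\ref{dichotomy} to produce $-1/2 < x_0 < -7/16$, $-3/8 < x_1 < -5/16$ and a length $l_1$, and apply Remark~\ref{dichotomy2} to produce $5/16 < x_2 < 3/8$, $7/16 < x_3 < 1/2$ and a length $l_2$; in both cases I adopt the stated convention that $l_i := L/2$ when the second alternative of the corresponding statement holds. The constant $C_0$ may be taken the same in both (the proof of Remark~\ref{dichotomy2} is the reflection of the proof of Lemma~\ref{dichotomy}). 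With these conventions $l_1, l_2 \in (0, L/2]$, the pair $x_0, x_1$ makes~\eqref{kappa} hold for every $y \in (-l_1, 0)$, and the pair $x_2, x_3$ makes~\eqref{kappa2} hold for every $y \in (-l_2, 0)$.

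Next I would set $\l := \min(l_1, l_2)$. Since $(-\l, 0) \subseteq (-l_1, 0)$ and $(-\l, 0) \subseteq (-l_2, 0)$, both inequalities of~\eqref{kappa3} hold automatically for all $y \in (-\l, 0)$, so the only remaining task is to select the correct alternative of the corollary. If both Lemma~\ref{dichotomy} and Remark~\ref{dichotomy2} are in their second alternative, then $l_1 = l_2 = L/2$, hence $\l = L/2$, and~\eqref{kappa3} holds on all of $(-L/2, 0)$; this is exactly the first alternative of Corollary~\ref{dich}, consistent with the convention $\l = L/2$ recorded there.

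In the remaining case at least one of the two statements is in its first alternative, and here I would verify that $\delta \max(\l, \l^3) \ge C_0$, which then places us in the second alternative of the corollary. By symmetry it suffices to treat $\l = l_1 \le l_2$. If Lemma~\ref{dichotomy} is in its first alternative, then $\delta \max(l_1, l_1^3) \ge C_0$ is part of its conclusion and we are done since $\l = l_1$. If Lemma~\ref{dichotomy} is in its second alternative, then $l_1 = L/2$, which forces $l_2 = L/2 = \l$; since the two statements are not both in their second alternative, Remark~\ref{dichotomy2} must then be in its first alternative, so $\delta \max(l_2, l_2^3) = \delta \max(\l, \l^3) \ge C_0$. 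This finishes the dichotomy.

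The argument is essentially bookkeeping; the only step that takes a moment's care — and the one I would flag as the (minor) crux — is the last verification that $\delta \max(\l, \l^3) \ge C_0$ when the minimizing length $l_i$ is inherited from a statement in its second alternative, which is resolved by observing that in that situation both lengths collapse to $L/2$ and the other statement, necessarily in its first alternative, supplies the required bound.
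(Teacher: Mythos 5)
Your proof is correct and follows the same route as the paper: apply Lemma~\ref{dichotomy} and Remark~\ref{dichotomy2} (with the same constant $C_0$, by symmetry) and take $\l := \min(l_1, l_2)$. The paper's own proof is a one-line invocation of exactly this idea; you have merely spelled out the case analysis verifying that $\delta \max(\l, \l^3) \ge C_0$ whenever alternative (ii) of the corollary is the relevant one, which is a reasonable elaboration and contains no gaps.
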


\begin{proof}
We obtain this result by combining Lemma~\ref{dichotomy} with Remark~\ref{dichotomy2}, choosing
$\l := \min(l_1,l_2)$.
\end{proof}

The next step in the proof of the lower bound is to show that $\int \xi_{,x}^2(x,y_0) \dx$ and
$\int |u_{x,x}(x,y_0) + \xi_{,x}^2(x,y_0)/2| \dx$ can not be simultaneously small for any $y_0 \in (-\l,0)$.

\begin{lemma}\label{dichotomy3}
For any $y_0 \in (-\l,0)$ we have either
\begin{equation} \nonumber
\int_{x_0}^{x_3} \xi_{,x}^2(x,y_0) \dx \ge 1/4
\end{equation}
or
\begin{equation} \nonumber
\int_{x_0}^{x_3} \left|u_{x,x}(x,y_0) + \xi_{,x}^2(x,y_0)/2\right| \dx \ge 1/16.
\end{equation}
\end{lemma}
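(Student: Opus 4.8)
The plan is to combine Corollary~\ref{dich} with the explicit form of the prescribed boundary data. Fix $y_0 \in (-\l,0)$; then \eqref{kappa3} holds at $y=y_0$. From \eqref{bdry} (with $\Delta = 1$) one computes $\xi_{,x}(x,0) = 2\cos(2\pi x/w_0)$, whence
\[
u_x(x,0) = -\oh\int_0^x |\xi_{,x}(t,0)|^2 \ud t = -x - \frac{w_0}{4\pi}\sin\!\Bigl(\frac{4\pi x}{w_0}\Bigr);
\]
thus $u_x(\cdot,0)$ agrees with $-x$ up to an oscillation of amplitude $w_0/(4\pi)$. Averaging $u_x(\cdot,0)$ over the left interval $[x_0,x_1]\subset(-\oh,-5/16)$ yields a value exceeding $3/8 - w_0/(4\pi)$ (the average of $-x$ over $[x_0,x_1]$ equals $-(x_0+x_1)/2 > 3/8$), while averaging over the right interval $[x_2,x_3]\subset(5/16,\oh)$ yields a value below $-3/8 + w_0/(4\pi)$.

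Next I would transport these bounds to the level $y=y_0$. The prescribed ranges of $x_0,\dots,x_3$ force both $[x_0,x_1]$ and $[x_2,x_3]$ to have length strictly greater than $1/16$, so dividing \eqref{kappa3} by the interval length shows that the average of $u_x(\cdot,y_0)$ over each of these intervals differs from the corresponding average of $u_x(\cdot,0)$ by less than $(1/512)/(1/16) = 1/32$. Choosing $\cw$ (hence $w_0\le\cw$) a small enough universal constant that the boundary oscillation contributes less than $1/16$ in total, we obtain
\[
\frac{1}{x_1-x_0}\int_{x_0}^{x_1} u_x(x,y_0)\dx \;-\; \frac{1}{x_3-x_2}\int_{x_2}^{x_3} u_x(x,y_0)\dx \;>\; \frac34 - \frac1{16} - \frac1{16} \;=\; \frac58 \;>\; \oh .
\]

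Finally, since $(u,\xi)$ may be assumed smooth (Remark~\ref{smoothness}), the intermediate value theorem furnishes $\bar x_{01}\in[x_0,x_1]$ and $\bar x_{23}\in[x_2,x_3]$ at which $u_x(\cdot,y_0)$ attains these two averages; note $\bar x_{01}\le x_1 < 0 < x_2 \le \bar x_{23}$, so $[\bar x_{01},\bar x_{23}]\subset[x_0,x_3]$. Consequently
\[
\oh \;<\; \bigl| u_x(\bar x_{01},y_0) - u_x(\bar x_{23},y_0)\bigr| \;=\; \Bigl|\int_{\bar x_{01}}^{\bar x_{23}} u_{x,x}(x,y_0)\dx\Bigr| \;\le\; \int_{x_0}^{x_3} \bigl| u_{x,x}(x,y_0)\bigr| \dx ,
\]
and the elementary inequality $|u_{x,x}| \le |u_{x,x} + \xi_{,x}^2/2| + \frac12 \xi_{,x}^2$ then gives
\[
\int_{x_0}^{x_3} \bigl| u_{x,x}(x,y_0) + \xi_{,x}^2(x,y_0)/2 \bigr| \dx \;+\; \frac12\int_{x_0}^{x_3} \xi_{,x}^2(x,y_0)\dx \;>\; \frac12 .
\]
Hence at least one of the two summands on the left exceeds $1/4$, so that either $\int_{x_0}^{x_3}\xi_{,x}^2(x,y_0)\dx > 1/2 \ge 1/4$ or $\int_{x_0}^{x_3}|u_{x,x}(x,y_0) + \xi_{,x}^2(x,y_0)/2|\dx > 1/4 \ge 1/16$, which is the asserted dichotomy. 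The only step requiring any care is the middle one — one must use that the $x_i$-intervals are bounded below in length (immediate from the stated ranges) and that $\cw$ is a sufficiently small universal constant so that the sinusoidal part of the boundary data is negligible; there is no analytic obstacle, and this is exactly the heuristic sketched in Section~\ref{subsec:7-1}.
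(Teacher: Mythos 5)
Your proof is correct and follows essentially the same strategy as the paper: use~\eqref{kappa3} to transport the order-one change in $u_x$ imposed by the boundary data at $y=0$ down to the level $y=y_0$, then split $u_{x,x}$ into $(u_{x,x}+\xi_{,x}^2/2)-\xi_{,x}^2/2$ and conclude. The only cosmetic differences are that you extract representative points via the intermediate value theorem where the paper integrates over $x_l\in(x_0,x_1)$ and $x_r\in(x_2,x_3)$, and that you estimate $u_x(\cdot,0)\approx -x$ directly rather than computing $\int\xi_{,x}^2(\cdot,0)$ via $\cos^2$; both are harmless reformulations of the same calculation.
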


\begin{proof}
Let $y_0 \in (-\l,0)$ be fixed and
$x_l \in (x_0,x_1), x_r \in (x_2,x_3)$. We have
\begin{align} \nonumber
u_x(x_r,0) - u_x(x_l,0) &= u_x(x_r,y_0) - u_x(x_l,y_0) +
\left[ (u_x(x_r,0) - u_x(x_r,y_0) ) - ( u_x(x_l,0) - u_x(x_l,y_0)) \right] \\  \nonumber
 &= \int_{x_l}^{x_r} u_{x,x}(x,y_0) \dx +
 \left[ (u_x(x_r,0) - u_x(x_r,y_0) ) - ( u_x(x_l,0) - u_x(x_l,y_0)) \right] \\ \nonumber
&= \int_{x_l}^{x_r} u_{x,x}(x,y_0) + \frac{1}{2}\xi_{,x}^2(x,y_0) \dx -
\frac{1}{2}\int_{x_l}^{x_r} \xi_{,x}^2(x,y_0) \dx + [\dots].
\end{align}
Therefore
\begin{align} \nonumber
\frac{1}{2}\int_{x_0}^{x_3} \xi_{,x}^2(x,y_0) \dx &\ge
\frac{1}{2}\int_{x_l}^{x_r} \xi_{,x}^2(x,y_0) \dx \\  \nonumber
&\ge u_x(x_l,0) - u_x(x_r,0) + \int_{x_l}^{x_r} u_{x,x}(x,y_0) +
\frac{1}{2}\xi_{,x}^2(x,y_0) \dx + [\dots] \\ \nonumber
&\ge u_x(x_l,0) - u_x(x_r,0) - \int_{x_l}^{x_r} \left| u_{x,x}(x,y_0) +
\frac{1}{2}\xi_{,x}^2(x,y_0)\right| \dx + [\dots].
\end{align}
We integrate the previous relation with respect to $x_l$ and $x_r$ to obtain
\begin{multline}\label{xix}
\frac{1}{2}\int_{x_0}^{x_3} \xi_{,x}^2(x,y_0) \dx \ge
\fint_{x_0}^{x_1} \fint_{x_2}^{x_3} u_x(x_l,0) - u_x(x_r,0) \ud x_r \ud x_l -
\int_{x_0}^{x_3} \left| u_{x,x}(x,y_0) + \frac{1}{2}\xi_{,x}^2(x,y_0)\right| \dx \\
- \left| \fint_{x_0}^{x_1} u_x(x_r,0) - u_x(x_r,y_0) \ud x_r \right| -
\left| \fint_{x_2}^{x_3} u_x(x_l,0) - u_x(x_l,y_0) \ud x_l\right| \\
\ge \frac{1}{2}\int_{x_1}^{x_2} \xi_{,x}^2(x,0) \dx - 1/16 -
\int_{x_0}^{x_3} \left| u_{x,x}(x,y_0) + \frac{1}{2}\xi_{,x}^2(x,y_0)\right| \dx,
\end{multline}
where the last inequality follows from~\eqref{kappa3} (averaging an integral adds a factor
of reciprocal length to it, e.g.
$\frac{1}{512} \frac{1}{|x_0 - x_1|} \le \frac{1}{512} \frac{1}{1/16} = \frac{1}{32}$)
and~\eqref{bdry} (which is used to replace $u_x(x_l,0) - u_x(x_r,0)$ by $\int \xi_{,x}^2/2 \dx$).
We estimate the first term on the right-hand side by
\begin{equation} \nonumber
\frac{1}{2}\int_{x_1}^{x_2} \xi_{,x}^2(x,0) \dx =
\frac{1}{2} \int_{x_1}^{x_2} (2 \cos(2\pi x w_0^{-1}))^2 \dx \ge
2 \int_{-5/16}^{5/16} \cos^2(2\pi x w_0^{-1}) \dx \ge 1/4,
\end{equation}
where we used that $\int_I \cos^2(\theta t) \ud t \ge |I|/4$ provided that $\theta |I| > 2$.
(In the present setting $|I| = 10/16$ and $\theta = 2\pi/w_0 \geq 2\pi/\cw$ by~\eqref{wgeh}; since
we always assume $\cw \leq 1$, the condition holds with room to spare.) This concludes the proof
since either we have
\begin{equation} \nonumber
\int_{x_0}^{x_3} \left| u_{x,x}(x,y_0) + \frac{1}{2}\xi_{,x}^2(x,y_0)\right| \dx \ge 1/16
\end{equation}
or else
\begin{equation} \nonumber
\frac{1}{2}\int_{x_0}^{x_3} \xi_{,x}^2(x,y_0) \dx \ge
\frac{1}{4} - \frac{1}{16} -
\int_{x_0}^{x_3} \left| u_{x,x}(x,y_0) + \frac{1}{2}\xi_{,x}^2(x,y_0)\right| \dx \ge \frac{1}{8}.
\end{equation}

\end{proof}

\subsection{The energy required for coarsening}

When the type I deformation achieves the optimal scaling, we expect that the wrinkles must
coarsen as one moves down the sheet. The following lemma justifies this, by giving a
lower bound for the maximal amplitude of $\xi$.

\begin{lemma}\label{lmK}
There exists a universal constant $C_1 > 0$ such that
\begin{equation} \nonumber
K := \max_{y \in (-\l,0)} \int_{x_0}^{x_3} \xi^2(x,y) \dx \ge
C_1 \left( \frac{h^2 \l}{\delta} \wedge 1\right).
\end{equation}
\end{lemma}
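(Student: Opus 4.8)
The plan is to combine three ingredients already available: the alternative of Lemma~\ref{dichotomy3} (at each height $y$, either $\int_{x_0}^{x_3}\xi_{,x}^2(x,y)\dx\gtrsim1$ or the membrane term $\int_{x_0}^{x_3}|u_{x,x}+\xi_{,x}^2/2|(x,y)\dx\gtrsim1$), the bounds~\eqref{otherterms} (which make both the membrane and the bending energies integrable against $\delta$), and the interpolation inequality~\eqref{interpolation}. The key idea is to run~\eqref{interpolation} ``in reverse'': a lower bound on $\|\xi_{,x}(\cdot,y)\|_{L^2}$ together with an upper bound on $\|\xi_{,xx}(\cdot,y)\|_{L^2}$ forces a lower bound on $\|\xi(\cdot,y)\|_{L^2}^2$, i.e.\ the wrinkles must carry amplitude; integrating in $y$ then converts the $\delta$-bound on $\iint\xi_{,xx}^2$ into a $\delta$-bound on $1/K$.

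First I would dispose of the trivial range: if $K$ exceeds a fixed universal threshold (of size comparable to $|I|^2/C_{int}$, where $I:=(x_0,x_3)$ satisfies $7/8\le|I|\le1$), then $K$ already dominates $h^2\l/\delta\wedge1\le1$ and the lemma holds; so assume $K$ is below that threshold. Next, let $B:=\{y\in(-\l,0):\int_{x_0}^{x_3}\xi_{,x}^2(x,y)\dx<1/4\}$ and $G:=(-\l,0)\setminus B$. For $y\in B$ the second alternative of Lemma~\ref{dichotomy3} holds, and Cauchy--Schwarz (using $|I|\le1$) upgrades it to $\int_{x_0}^{x_3}|u_{x,x}+\xi_{,x}^2/2|^2(x,y)\dx\gtrsim1$; integrating over $B$ and comparing with $\iint_\Omega|u_{x,x}+\xi_{,x}^2/2|^2\dxy\le\delta$ from~\eqref{otherterms} gives $|B|\lesssim\delta$, hence $|G|\ge\l-C\delta$. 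For $y\in G$ we have $\int_I\xi_{,x}^2(\cdot,y)\dx\ge1/4$ and $\int_I\xi^2(\cdot,y)\dx\le K$, so~\eqref{interpolation} reads $\tfrac14\le C_{int}\big(K^{1/2}\|\xi_{,xx}(\cdot,y)\|_{L^2(I)}+|I|^{-2}K\big)$; since $K$ is below the threshold the last term is at most $\tfrac18$, and we conclude $\int_I\xi_{,xx}^2(\cdot,y)\dx\gtrsim K^{-1}$ for all $y\in G$. Integrating this over $G$ and using $\iint_\Omega\xi_{,xx}^2\dxy\le\iint_\Omega|\nabla^2\xi|^2\dxy\le\delta h^{-2}$ (again~\eqref{otherterms}) yields $|G|K^{-1}\lesssim\delta h^{-2}$, i.e.\ $K\gtrsim|G|h^2/\delta$.

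It remains to control $|G|$ from below. If $\delta$ is small relative to $\l$ (say $C\delta\le\l/2$, with $C$ the constant from $|B|\le C\delta$), then $|G|\ge\l/2$ and $K\gtrsim h^2\l/\delta\ge h^2\l/\delta\wedge1$, which finishes this case. If instead $\delta\gtrsim\l$, then $G$ may be essentially empty and the previous bound is vacuous, so I would instead observe that $h^2\l/\delta\wedge1\lesssim h^2$, while the boundary data~\eqref{bdry} together with continuity of $\xi$ (Remark~\ref{smoothness}) give $K\ge\int_{x_0}^{x_3}\xi^2(x,0)\dx=\pi^{-2}w_0^2\int_{x_0}^{x_3}\sin^2(2\pi x/w_0)\dx\gtrsim w_0^2\ge h^2$ (using $|I|\gtrsim1$, that $(x_0,x_3)$ contains many periods since $w_0\le\cw$, and $h\le w_0$ from~\eqref{wgeh}); hence $K\gtrsim h^2\l/\delta\wedge1$ here as well. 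Collecting the constants from the three regimes produces the asserted universal $C_1>0$.

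The step I expect to require the most care is the ``reverse interpolation'': to run~\eqref{interpolation} in the useful direction one must be sure that the lower-order term $|I|^{-2}\|\xi(\cdot,y)\|_{L^2}^2$ is strictly dominated, which is precisely why the preliminary reduction to ``$K$ below a universal threshold'' is needed. The remaining bookkeeping — the measure estimate $|B|\lesssim\delta$, passing from the pointwise-in-$y$ inequalities to their integrated forms, and the degenerate regime $\delta\gtrsim\l$ handled via the boundary condition — is routine given Lemmas~\ref{lm-xi1} and~\ref{dichotomy3}.
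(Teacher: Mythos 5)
Your proof is correct, and it uses exactly the same three ingredients as the paper's argument: the pointwise-in-$y$ dichotomy from Lemma~\ref{dichotomy3}, the interpolation inequality~\eqref{interpolation}, and the energy bounds~\eqref{otherterms}, together with the same trivial fallback via the boundary data when $\delta \gtrsim \l$. The only difference is organizational. The paper squares the first alternative in the dichotomy, integrates the resulting pointwise bound $\left(\int\xi_{,x}^2\right)^2 + \int|u_{x,x}+\xi_{,x}^2/2| \ge 1/16$ over $y\in(-\l,0)$, and then bounds the two integrated terms separately by interpolation and Cauchy--Schwarz, arriving at $\l/32 \lesssim K\delta h^{-2} + \l K^2$. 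You instead decompose $(-\l,0)$ into the bad set $B$ (where the membrane alternative holds) and the good set $G$ (where the wrinkle alternative holds), derive a measure bound $|B|\lesssim\delta$ from~\eqref{otherterms}, run interpolation ``in reverse'' on $G$ to get $\int\xi_{,xx}^2 \gtrsim K^{-1}$ pointwise, and integrate over $G$. This requires the preliminary reduction ``$K$ below a universal threshold'' so the lower-order interpolation term can be absorbed; the paper achieves the same effect by carrying the $\l K^2$ term along and splitting at the very end into $K\gtrsim h^2\l/\delta$ or $K\gtrsim 1$. Both versions are equally rigorous; the paper's is slightly more compact because it never estimates $|B|$ explicitly, while your set-theoretic split arguably makes the role of each alternative in the dichotomy more transparent.
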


\begin{proof}
If $\delta \ge \l/32^2$ the assertion is almost trivial; indeed, since
$x_3 - x_0 > 1/2$ and $h \le w_0 \le \cw$ (see~\eqref{wgeh}), we have
\begin{equation} \nonumber
K \ge \int_{x_0}^{x_3} \xi^2(x,0) \dx \gtrsim w_0^2 \ge h^2 \ge \frac{1}{(32)^2} \frac{h^2 \l}{\delta}.
\end{equation}
in this case.

If on the other hand $\delta \le \frac{\l}{32^2}$ then we argue as follows.
For any $y \in (-\l,0)$ Lemma~\ref{dichotomy3} implies
\begin{equation}\label{eq1}
\left(\int_{x_0}^{x_3} \xi_{,x}^2(x,y) \dx\right)^2 +
\int_{x_0}^{x_3} \left|u_{x,x}(x,y) + \xi_{,x}^2(x,y)/2\right| \dx \ge \frac{1}{16}.
\end{equation}
Using the interpolation inequality~\eqref{interpolation} and $|I|^2 = |x_3-x_0|^2 \ge 3/4$ we obtain
\begin{multline} \nonumber
\left( \int_{x_0}^{x_3} |\xi_{,x}(x,y)|^2 \dx \right)^2 \\
\le C_{int}^2\left[
\left( \int_{x_0}^{x_3} |\xi(x,y)|^2 \dx \right)^{1/2}
\left( \int_{x_0}^{x_3} |\xi_{,xx}(x,y)|^2 \dx \right)^{1/2} +
\frac{3}{4} \int_{x_0}^{x_3} |\xi(x,y)|^2 \dx \right]^2 \\
\le \bar C \left[ K \int_{x_0}^{x_3} |\xi_{,xx}(x,y)|^2 \dx + K^2\right].
\end{multline}
We integrate~\eqref{eq1} in $y$ and use the previous inequality to obtain
\begin{align*} \nonumber
\l/16 &\le \int_{-\l}^0 \left(\int_{x_0}^{x_3} \xi_{,x}^2(x,y) \dx\right)^2 \dy +
\int_{-\l}^0  \int_{x_0}^{x_3} \left|u_{x,x}(x,y_0) + \xi_{,x}^2(x,y_0)/2\right| \dxy \\  \nonumber
&\le \bar C K \int_{-\l}^0  \int_{x_0}^{x_3} |\xi_{,xx}(x,y)|^2 \dx \dy + \bar C \l K^2 \\
&\quad + \l^{1/2}
\left( \int_{-\l}^0 \int_{x_0}^{x_3} \left|u_{x,x}(x,y_0) + \xi_{,x}^2(x,y_0)/2\right|^2 \dxy \right)^{1/2} \\
\nonumber &\overset{\eqref{otherterms}}{\le} \bar CK \delta h^{-2} + \bar C \l K^2 + \delta^{1/2} \l^{1/2} \le
\bar CK \delta h^{-2} + \bar C \l K^2 + \l/32.
\end{align*}
using the assumption that $\delta \le \l/(32)^2$. We have obtained
\begin{equation} \nonumber
\l/32 \lesssim K \delta h^{-2} + l_0K^2,
\end{equation}
which implies
\begin{equation} \nonumber
K = \max_{y \in (-\l,0)} \int_{x_0}^{x_3} |\xi(x,y)|^2 \dx \ge
C_1 \left( \frac{h^2 \l}{\delta} \wedge 1\right).
\end{equation}
\end{proof}

The next lemma shows that coarsening costs energy:

\begin{lemma}\label{mainlemma}
Let
\begin{align} \nonumber
a := \int_{-1/2}^{1/2} \xi^2(x,0) \dx,\qquad b := \max_{y \in (-\l,0)} \int_{-1/2}^{1/2} \xi^2(x,y) \dx,
\end{align}
and assume
\begin{equation}\label{ah}
b \ge 4a, \qquad h^2 \le Da
\end{equation}
for some constant $D > 0$. Then there exists a constant $C_2(D) > 0$, depending on $D$, such that
\begin{multline}\label{eq5}
 \frac{\tau L}{8} \int_{-\l}^0 \int_{-1/2}^{1/2} \xi_{,y}^2(x,y) \dx \dy +
 h^2 \int_{-\l}^0 \int_{-1/2}^{1/2} \xi_{,xx}^2(x,y) \dx \dy \\
 + \int_{-\l}^0 \int_{-1/2}^{1/2} \left|u_{x,x}(x,y) + \xi_{,x}^2(x,y)/2\right|^2 \dx \dy \ge
 C_2(D) h \sqrt{\tau L} \ln(\bar b/a),
\end{multline}
where $\bar b := b \wedge (8C_{int})^{-1}$.
\end{lemma}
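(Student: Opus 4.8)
\textbf{Proof proposal for Lemma \ref{mainlemma}.}
The plan is to set up a dyadic decomposition in the $y$-variable based on how fast $\|\xi(\cdot,y)\|_{L^2}^2$ grows from $a$ toward $\bar b$, and to charge a fixed amount of energy, of order $h\sqrt{\tau L}$, to each doubling. Write $m(y) := \int_{-1/2}^{1/2}\xi^2(x,y)\dx$, so $m(0)=a$ and $\sup_{y\in(-\l,0)} m(y) = b$. By \eqref{ah} we have $b \ge 4a$; let $y_*$ be a point where $m$ is (nearly) maximal. Choosing $-\l < y_* < 0$, on the interval $(y_*,0)$ the function $m$ increases (in the sense that its sup is $b$ but $m(0)=a$), so we may pick a chain of ``times'' $0 = t_0 > t_1 > \dots > t_N \ge y_*$ such that $m(t_j) \approx 2^j a$ as long as $2^j a \le \bar b$; this forces $N \gtrsim \log_2(\bar b / a)$. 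It therefore suffices to show that on each slab $S_j := (-1/2,1/2)\times(t_{j+1},t_j)$ the left-hand side of \eqref{eq5} contributes at least $c\, h\sqrt{\tau L}$, with $c$ depending only on $D$ (via $C_{int}$).

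The heart of the argument is this per-slab lower bound, and here is where I expect to lean on the argument from \cite{bib-kohnnguyen-raft}. On the slab $S_j$ we know that $m$ goes from $\approx 2^{j}a$ at one end to $\approx 2^{j+1}a$ at the other, so in particular $\|\xi(\cdot, t)\|_{L^2}^2$ is comparable to $\sigma_j := 2^j a$ throughout (up to factors of $2$), and the change in $\|\xi(\cdot,\cdot)\|_{L^2}^2$ across the slab is $\gtrsim \sigma_j$. Two competing mechanisms control this: either $\xi_{,y}$ is large, which is penalized by the first term $\frac{\tau L}{8}\int_{S_j}\xi_{,y}^2$; or $\xi$ is nearly $y$-independent on the slab but then, since its $L^2$-norm is $\sim\sqrt{\sigma_j}$ and it must still ``waste arclength'' (keeping $\int \xi_{,x}^2 \sim 1$, which is where Lemma \ref{dichotomy3} / the compression term enters), the interpolation inequality \eqref{interpolation} forces $\int \xi_{,xx}^2 \gtrsim \sigma_j^{-1}$ pointwise in $y$ on $S_j$ whenever $\sigma_j$ is not too large — this is penalized by $h^2\int_{S_j}\xi_{,xx}^2$. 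The quantity $\bar b = b\wedge(8C_{int})^{-1}$ is precisely the cutoff that keeps us in the regime where this interpolation bound is effective; beyond it, $\|\xi\|_{L^2}$ is large enough that the bending term is no longer forced to be large, so the chain of doublings is truncated there.

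To make the per-slab estimate quantitative, I would argue as follows on a given slab of length $\lambda_j := t_j - t_{j+1}$. From $\int_{t_{j+1}}^{t_j}\int \xi_{,y}^2 \,\ge\, \lambda_j^{-1}\big(\int(\xi(x,t_j)-\xi(x,t_{j+1}))\dx$-type lower bounds$\big)$ — more precisely, using Cauchy–Schwarz in $y$ to bound the change in $\|\xi\|_{L^2}$ by $(\lambda_j \int_{S_j}\xi_{,y}^2)^{1/2}\cdot(\text{something})$ — one gets $\int_{S_j}\xi_{,y}^2 \gtrsim \sigma_j/\lambda_j$. Meanwhile, for each fixed $y\in(t_{j+1},t_j)$, the lower bound $\int \xi_{,x}^2(\cdot,y)\gtrsim 1$ that one extracts from Lemma \ref{dichotomy3} (handling the alternative case, where instead $\int|u_{x,x}+\xi_{,x}^2/2|$ is large, by the third term on the left of \eqref{eq5}) combined with \eqref{interpolation} and $\|\xi(\cdot,y)\|_{L^2}^2 \sim \sigma_j \le 8C_{int}\bar b \le 1$ gives $\int \xi_{,xx}^2(\cdot,y) \gtrsim \sigma_j^{-1}$; integrating in $y$ yields $h^2\int_{S_j}\xi_{,xx}^2 \gtrsim h^2 \lambda_j/\sigma_j$. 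Adding the two contributions and using $\frac{\tau L}{8}\cdot\frac{\sigma_j}{\lambda_j} + h^2\cdot\frac{\lambda_j}{\sigma_j} \ge 2\big(\frac{\tau L}{8}\big)^{1/2} h$ by AM–GM, we get $\gtrsim h\sqrt{\tau L}$ per slab, uniformly in $j$. Summing over $j=0,\dots,N-1$ and recalling $N\gtrsim \log(\bar b/a)$ gives \eqref{eq5}.

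\textbf{Anticipated main obstacle.} The delicate point is that Lemma \ref{dichotomy3} gives a lower bound on $\int_{x_0}^{x_3}\xi_{,x}^2(\cdot,y)$ that holds with a \emph{dichotomy}: it might instead be the compression term $\int|u_{x,x}+\xi_{,x}^2/2|$ that is of order $1$. One must carefully split the set of heights $y\in(t_{j+1},t_j)$ into those of ``type-bending'' and those of ``type-compression,'' and show that in either case the corresponding term on the left of \eqref{eq5}, integrated over that subset, is controlled from below — being careful that a slab dominated by type-compression heights may have a small bending contribution but then picks up $\gtrsim \lambda_j^{(\text{comp})}$ from the third term, which is \emph{not} obviously $\gtrsim h\sqrt{\tau L}\cdot(\text{fraction of slab})$ unless $\lambda_j$ is itself large; reconciling this likely requires re-choosing the chain $\{t_j\}$ adaptively (e.g. stopping a slab early when enough compression energy has accumulated) rather than using the naive geometric doubling. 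A second, more technical nuisance is the passage from $\int_{-1/2}^{1/2}\xi_{,x}^2$ (which appears in \eqref{eq5}) to $\int_{x_0}^{x_3}\xi_{,x}^2$ (which is what Lemma \ref{dichotomy3} controls), and ensuring the interpolation constant $C_{int}$ is the same one appearing in the definition of $\bar b$ — this is bookkeeping but must be done consistently so that the cutoff $\bar b=b\wedge(8C_{int})^{-1}$ does its job.
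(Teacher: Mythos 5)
Your overall strategy is the paper's: decompose $(-\l,0)$ dyadically according to the growth of $m(y):=\int\xi^2(x,y)\,dx$ from $a$ toward $\bar b$, bound $\int_{S_j}\xi_{,y}^2\gtrsim\sigma_j/\lambda_j$ by Cauchy--Schwarz, bound the bending term below via interpolation using $m\lesssim\sigma_j\le\bar b\le(8C_{int})^{-1}$, and close with AM--GM to get $\gtrsim h\sqrt{\tau L}$ per slab. (One small technical point you gloss over: to ensure $m(y)\lesssim\sigma_j$ \emph{throughout} the slab, the paper defines $y_i:=\max\{y\in[-\l,0]:m(y)=2^{2i}a\}$; the max in that definition is what guarantees the upper bound on $m$ inside $(y_{i+1},y_i)$.)

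The gap is in your ``anticipated obstacle.'' You correctly see that Lemma~\ref{dichotomy3} gives a dichotomy per height $y$ -- either $\int\xi_{,x}^2\gtrsim 1$ (so interpolation forces $\int\xi_{,xx}^2\gtrsim 1/\sigma_j$) or $\int|u_{x,x}+\xi_{,x}^2/2|^2\gtrsim 1$ -- but then you conclude this requires splitting the slab by height-type and re-choosing the chain adaptively. It does not, and this is exactly where the hypothesis $h^2\le Da$ is used. On a slab with $m\le 2^{2(i+1)}a$, the per-height bending lower bound in the first case is $h^2\int\xi_{,xx}^2\gtrsim h^2/(2^{2i}a)$, and since $h^2/(2^{2i}a)\le h^2/a\le D$, the compression bound $1/256$ in the second case is automatically $\gtrsim C(D)\,h^2/(2^{2i}a)$ as well. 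Hence for \emph{every} $y$ in the slab, $\int|u_{x,x}+\xi_{,x}^2/2|^2+h^2\int\xi_{,xx}^2\gtrsim C(D)\,h^2/(2^{2i}a)$, uniformly; integrating this in $y$ over the slab and combining with the $\xi_{,y}^2$ bound by AM--GM gives the per-slab $C(D)h\sqrt{\tau L}$ directly, with no adaptive stopping and no partition of $y$-heights. So the plan is right, but you have missed the simple observation that makes the dichotomy harmless -- which is precisely the role of the assumption $h^2\le Da$ in the statement.
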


\begin{proof}
Let
\begin{equation} \nonumber
N := \lfloor\log_2(\bar b/a)/2\rfloor \ge 1,
\end{equation}
where $N \ge 1$ follows from \eqref{ah} and the fact that
$4a=2w_0^2\pi^{-2} \le 2\cw^2\pi^{-2} \le (8C_{int})^{-1}$ (the last inequality is a
smallness condition on $\cw$).
Then $\bar b \ge 2^{2N} a$ and for $i = 0,\ldots,N$ we can define
\begin{equation} \nonumber
 y_i := \max \left\{y \in [-\l,0] : \int_{-1/2}^{1/2} \xi^2(x,y) \dx = 2^{2i} a\right\}
\end{equation}
(note that $y_0 = 0$).

We will prove that for $i=0,\ldots,N-1$ the left-hand side of~\eqref{eq5} in
$(y_{i+1},y_i) \times (-1/2,1/2)$ is of order at least $h\sqrt{\tau L}$.
Fix $i \in \{ 0, \ldots, N-1 \}$. We see
\begin{equation}\label{est5}
\begin{aligned}
2^{2i+2}a &= \int_{-1/2}^{1/2} \xi^2(x,y_{i+1}) \dx =
\int_{-1/2}^{1/2} \left| \left( \int_{y_{i+1}}^{y_i} \xi_{,y}(x,y) \dy \right) + \xi(x,y_i)\right|^2 \dx \\
&\le 2\int_{-1/2}^{1/2} \left| \int_{y_{i+1}}^{y_i} \xi_{,y}(x,y) \dy \right|^2 \dx +
2\int_{-1/2}^{1/2} \xi^2(x,y_i) \dx \\
&\le 2(y_i - y_{i+1}) \int_{-1/2}^{1/2} \int_{y_{i+1}}^{y_i} \xi_{,y}^2(x,y) \dy \dx + 2 \cdot 2^{2i}a,
\end{aligned}
\end{equation}
which immediately implies
\begin{equation}\label{eq4}
(y_i - y_{i+1}) \int_{-1/2}^{1/2} \int_{y_{i+1}}^{y_i} \xi_{,y}^2(x,y) \dy \dx \ge 2^{2i} a.
\end{equation}

By Lemma~\ref{dichotomy3}, for any $y \in (-\l,0)$ either
\begin{equation} \nonumber
 \int_{-1/2}^{1/2} \left|u_{x,x}(x,y) + \xi_{,x}^2(x,y)/2\right|^2 \dx \ge 1/256
\end{equation}
or else
\begin{equation} \nonumber
 \int_{-1/2}^{1/2} \xi_{,x}^2(x,y) \dx \ge 1/4.
\end{equation}
In the latter case, for $y \in (y_{i+1},y_i)$ it follows from the interpolation
inequality~\eqref{interpolation} that
\begin{align*}
1/16 &\le \left( \int_{-1/2}^{1/2} \xi_{,x}^2(x,y) \dx \right)^2 \\
&\le C_{int}^2 \left[
\left( \int_{-1/2}^{1/2} \xi^2(x,y) \dx \right)^\oh \left( \int_{-1/2}^{1/2} \xi_{,xx}^2(x,y) \dx \right)^\oh +
\int_{-1/2}^{1/2} \xi^2(x,y) \dx
\right]^2 \\
&\le 2C_{int}^2 \left( \int_{-1/2}^{1/2} \xi^2(x,y) \dx \right)
\left( \int_{-1/2}^{1/2} \xi_{,xx}^2(x,y) \dx \right) + 2C_{int}^2 \bar b^2 \\
&\le 2C_{int}^22^{2(i+1)}a \int_{-1/2}^{1/2} \xi_{,xx}^2(x,y) \dx + \frac{2C^2_{int}}{64 C_{int}^2},
\end{align*}
and we can absorb the last term into the left-hand side to get
\begin{equation}\nonumber
2^{2(i+1)}a \int_{-1/2}^{1/2} \xi_{,xx}^2(x,y) \dx \gtrsim 1.
\end{equation}

Using~\eqref{ah} we obtain
\begin{multline} \nonumber
\int_{-1/2}^{1/2} \left|u_{x,x}(x,y) + \xi_{,x}^2(x,y)/2\right|^2 \dx +
h^2 \int_{-1/2}^{1/2} \xi_{,xx}^2(x,y) \dx \\
\gtrsim \min\left( 1/256, h^2/\left(a2^{2(i+1)}\right)\right) \gtrsim C(D)\frac{h^2}{2^{2i}a}.
\end{multline}
We integrate this inequality in $y$ over $(y_{i+1},y_i)$ and use
\eqref{eq4} to get
\begin{multline} \nonumber
\frac{\tau L}{8} \int_{y_{i+1}}^{y_i} \int_{-1/2}^{1/2} \xi_{,y}^2(x,y) \dx \dy +
h^2 \int_{y_{i+1}}^{y_i} \int_{-1/2}^{1/2} \xi_{,xx}^2(x,y) \dx \dy \\
+ \int_{y_{i+1}}^{y_i} \int_{-1/2}^{1/2}  \left|u_{x,x}(x,y) + \xi_{,x}^2(x,y)/2\right|^2 \dx \dy \ge
\frac{\tau L}{8} \frac{2^{2i}a}{y_i - y_{i+1}} + C(D)\frac{(y_i - y_{i+1}) h^2}{2^{2i}a} \\
\gtrsim C(D)h \sqrt{\tau L},
\end{multline}
where the last inequality follows from the AM-GM inequality. Since $N \gtrsim \log(\bar b/a)$,
summing the previous relation for $i=0,\ldots,N-1$ implies~\eqref{eq5}.
\end{proof}

\subsection{The lower bound half of Theorem~\ref{thm1}} 
We are finally ready to prove the lower bound half of Theorem \ref{thm1}.
We assume throughout the following discussion that the
assumptions of Theorem~\ref{thm1} are valid. As explained at the beginning of this section, we shall
argue by contradiction; in particular, shall assume that $(u,\xi)$ satisfies~\eqref{deltaless}.

By Corollary~\ref{dich} there exist $-1/2 < x_0 < -7/16$, $-3/8 < x_1 < -5/16$,
$5/16 < x_2 < 3/8$, $7/16 < x_3 < 1/2$, such that either there exist $l_0 \in (0,L/2)$ with the property
that for any $y \in (-l_0,0)$:
\begin{equation}\label{lbpf1}
\begin{aligned}
\left| \int_{x_0}^{x_1} u_x(x,y) - u_x(x,0) \dx \right| \le 1/512,\\
\left| \int_{x_2}^{x_3} u_x(x,y) - u_x(x,0) \dx \right| \le 1/512,
\end{aligned}
\end{equation}
and
\begin{equation}\label{lb-ing1}
\delta \max (\l, \l^3) \ge C_0,
\end{equation}
or else~\eqref{lbpf1} holds for all $y \in (-L/2,0)$ (in this case $l_0 = L/2$).

We use the notation from Lemma~\ref{mainlemma}, i.e.
\begin{equation} \nonumber
a = \int_{-1/2}^{1/2} \xi^2(x,0) \dx,\qquad
\b = \left( \max_{y \in (-\l,0)} \int_{-1/2}^{1/2} \xi^2(x,y) \dx \right) \wedge (8C_{int})^{-1}.
\end{equation}
We distinguish between two cases: $\b \ge 4a$ (when the wrinkles are expected to coarsen) and $\b \le 4a$
(when they are not).
\medskip

\noindent {\bf Case 1: $\b \ge 4a$.} Using the same idea as
in~\eqref{est5} we get that
\begin{equation} \nonumber
\b \le 2  l_0 \iint_{\Omega} \xi_{,y}^2 \dxy + 2a.
\end{equation}
Using $\b \ge 4a$ and $a \sim w_0^2$, the previous relation implies $\iint \xi^2_{,y} \gtrsim w_0^2 l_0^{-1}$.
Now it follows from~\eqref{xiy} that
\begin{equation}\label{speciallb}
 \delta \gtrsim (\tau L) \iint \xi^2_{,y} \gtrsim w_0^2 \tau L l_0^{-1}.
\end{equation}


Next we want to show that
\begin{equation}\label{lbtoshow}
\delta \gtrsim h\sqrt{\tau L} \ln \left( w_0^{-2} \left( \frac{h l_0}{\sqrt{\tau L}} \wedge 1 \right) + 1\right) . \end{equation}
We distinguish two subcases, when $hl_0 / (w_0^2 \sqrt{\tau L}) \le 1$ and when $hl_0 / (w_0^2 \sqrt{\tau L}) > 1$.
The definition of the first subcase is equivalent to $h\sqrt{\tau L} \le w_0^2 \tau L l_0^{-1}$, which together
with~\eqref{speciallb} implies
\begin{equation} \nonumber
\delta \gtrsim w_0^2 \tau L l_0^{-1} \ge h\sqrt{\tau L} \gtrsim
h\sqrt{\tau L} \ln \left( \frac{h l_0}{w_0^2 \sqrt{\tau L}} + 1\right) \ge
h\sqrt{\tau L} \ln \left( w_0^{-2} \left( \frac{h l_0}{\sqrt{\tau L}} \wedge 1\right) + 1\right),
\end{equation}
where the second-last inequality follows from the fact that the argument in the logarithm is less than $2$.

Turning to the second subcase: we must show~\eqref{lbtoshow} when $hl_0 / (w_0^2 \sqrt{\tau L}) > 1$.
Let $C_1$ be the constant that was chosen in Lemma~\ref{lmK}.
We may suppose that $\delta$ is small in the sense that
\begin{equation}\label{lbtoshow1}
\delta \le 2\C h\sqrt{\tau L} \ln \left( \frac{h l_0}{w_0^2 \sqrt{\tau L}} + 1\right),
\end{equation}
since the opposite inequality implies~\eqref{lbtoshow}. By the initial conditions~\eqref{bdry}
\begin{equation} \nonumber
 a = \int_{-1/2}^{1/2} \xi^2(x,0) \dx = w_0^2 \pi^{-2}/2,
\end{equation}
in particular thanks to $w_0 \ge h$ (see~\eqref{wgeh}), condition \eqref{ah} holds with
$D = 2\pi^2$. By Lemma \ref{lm-xi1} $2\delta$ is more than the left-hand side in~\eqref{eq5},
and so using Lemma~\ref{mainlemma} and  Lemma~\ref{lmK} we get that
\begin{equation}\label{estdelta1}
2\delta \ge C_2 h\sqrt{\tau L} \ln (\b/a) \ge
C_2 h\sqrt{\tau L} \ln \left( \frac{2 C_1 \pi^2}{w_0^2} \left(  \frac{h^2 l_0}{\delta} \wedge 1\right) \right).
\end{equation}
We now observe using~\eqref{lbtoshow1} that
\begin{align*}
\frac{2C_1 \pi^2 h^2 l_0}{w_0^2 \delta} &= 
\frac{h l_0}{w_0^2 \sqrt{\tau L}} \cdot \frac{2C_1 \pi^2 h \sqrt{\tau L}}{\delta} \\
& \geq \frac{h l_0}{w_0^2 \sqrt{\tau L}} \cdot \frac{1}{\log \left( \frac{hl_0}{w_0^2 \sqrt{\tau L}} + 1 \right)} ,
\end{align*}
so \begin{equation}\label{estdelta2}
C_2 h\sqrt{\tau L} \ln \left( 2 C_1 \pi^2 \frac{h^2 l_0}{w_0^2 \delta}\right) \ge
C_2 h\sqrt{\tau L} \left(
\ln \left( \frac{h l_0}{w_0^2 \sqrt{\tau L}} \right) -
\ln \ln \left( \frac{h l_0}{w_0^2 \sqrt{\tau L}} + 1 \right) \right).
\end{equation}
Since $\ln(t) - \ln \ln (t+1) \ge \ln(t+1)/2$ for $t > 1$, combination of~\eqref{estdelta1}
and~\eqref{estdelta2} implies
\begin{equation}\label{lb-ing3}
\delta \ge \frac{C_2}{4} h\sqrt{\tau L}
\min\left( \ln \left( \frac{h l_0}{w_0^2 \sqrt{\tau L}} + 1 \right), 4\ln\left( \frac{2C_1 \pi^2}{w_0^2} \right)\right)
\gtrsim h\sqrt{\tau L} \ln \left( w_0^{-2} \left( \frac{h l_0}{\sqrt{\tau L}}\wedge 1\right) + 1\right),
\end{equation}
where we used that $hl_0/(w_0^2 \sqrt{\tau L}) > 1$ and $w_0 \leq \cw$ (with $\cw$ sufficiently small).
This completes the proof of~\eqref{lbtoshow} in the second subcase.

We now easily conclude the validity of the lower bound half of Theorem~\ref{thm1} when $\b \ge 4a$.
In fact, if $l_0 < L/2$
then adding \eqref{lbtoshow}, \eqref{speciallb}, and \eqref{lb-ing1} gives
\begin{equation} \nonumber
\delta \gtrsim h\sqrt{\tau L} \ln \left( w_0^{-2} \left( \frac{h l_0}{\sqrt{\tau L}}\wedge 1\right) + 1 \right) +
w_0^2 \tau L l_0^{-1} + \min \left\{ l_0^{-1}, l_0^{-3} \right\} ,
\end{equation}
while if $l_0 = L/2$ we have
\begin{equation} \nonumber
\delta \gtrsim h\sqrt{\tau L} \ln \left( w_0^{-2} \left( \frac{h L}{\sqrt{\tau L}}\wedge 1\right) + 1 \right).
\end{equation}
\medskip

\noindent {\bf Case 2: $\b \le 4a$.}
Since $\b \le 4a$ and $4a = 2w_0^2 \pi^{-2} \le 2\cw^2 \pi^{-2} < (8C_{int})^{-1}$
for small enough $\cw$, we immediately see that the value of
$b = \max_{y \in (-\l,0)} \int_{-1/2}^{1/2} \xi^2(x,y) \dx$ agrees with $\b$.

To prove the lower bound in this case it is sufficient to show that
\begin{equation}\label{speciallb2}
\delta \gtrsim \min \left( h^2 w_0^{-2}L, \min_{l \in (0,L)} \left( h^2 w_0^{-2} l + w_0^2 \tau L l^{-1} +
\min\left(l^{-1},l^{-3}\right)\right) \right).
\end{equation}
Indeed, since $t \ge \ln(1+t)$ we have
\begin{gather} \nonumber
h^2 w_0^{-2}l \ge h\sqrt{\tau L}\ln\left( \frac{h l}{w_0^2 \sqrt{\tau L}} + 1\right) \ge
h\sqrt{\tau L}\ln\left( w_0^{-2} \left( \frac{h l}{\sqrt{\tau L}} \wedge 1\right) + 1\right)
\end{gather}
for any $l > 0$, and so the desired lower bound for $\delta$ follows from~\eqref{speciallb2}.

The rest of this section is devoted to proving~\eqref{speciallb2}. We note for future reference that
the prescribed boundary conditions~\eqref{bdry} satisfy
\begin{align*}
||\xi(\cdot,0)||_{L^2(-1/2,1/2)}^2 &= w_0^2\pi^{-2}/2 = a,
\\ ||\xi_{,x}(\cdot,0)||_{L^2(-1/2,1/2)}^2 &= 2,
\\ ||\xi_{,xx}(\cdot,0)||_{L^2(-1/2,1/2)}^2 &= 8 \pi^2  w_0^{-2}.
\end{align*}

If $||\xi_{,xx}(\cdot,y)||_{L^2(-1/2,1/2)}^2 \ge \pi^2 w_0^{-2}/(8C^2_{int})$ for all $y \in (-L/2,0)$
then our task is almost trivial, since
\begin{equation}\label{lb-special-p1}
 \delta \overset{\eqref{otherterms}}{\ge} h^2 \iint_\Omega \xi_{,xx}^2(x,y) \dxy \gtrsim h^2 w_0^{-2} L,
\end{equation}
which implies~\eqref{speciallb2}.

Thus it suffices to consider the situation when
$||\xi_{,xx}(\cdot,y)||_{L^2(-1/2,1/2)}^2 < \pi^2 w_0^{-2}/(8C^2_{int})$
for some $y \in (-L/2,0)$. Recalling (see Remark \ref{smoothness}) that we may assume $(u,\xi)$ are smooth,
consider the first $y_0 \in (-L/2,0)$ (i.e. the one with smallest $|y_0|$) such that
\begin{equation} \nonumber
\int_{-1/2}^{1/2} \xi_{,xx}^2(x,y_0) \dx \le \pi^{2} w_0^{-2}/(8C^2_{int}).
\end{equation}
Note that by the definition of $y_0$ we have $\int \xi_{,xx}^2(x,y) \dx \ge \pi^2 w_0^{-2} / (8C_{int}^2), $
for all $y \in (y_0,0)$, hence
\begin{gather} \label{est-later}
\delta \overset{\eqref{otherterms}}{\ge} h^2 \int_{y_0}^0 \int_{-1/2}^{1/2} \xi_{,xx}^2(x,y) \dx \dy \gtrsim
h^2 w_0^{-2}|y_0|.
\end{gather}
Let
\begin{equation} \nonumber
\eta(x) := \xi(x,0) - \xi(x,y_0)\qquad \textrm{for } x \in (-1/2,1/2)
\end{equation}
and observe that
\begin{multline}\label{etaL2}
||\eta||_{L^2}^2 = \int_{-1/2}^{1/2} \eta^2 \dx =
\int_{-1/2}^{1/2} |\xi(x,0) - \xi(x,y_0)|^2 \dx\\
\le \int_{-1/2}^{1/2} \left( |y_0| \int_{y_0}^0 \xi_{,y}^2(x,y) \dy \right) \dx \le
|y_0| \iint_{\Omega} \xi_{,y}^2 \dxy .
\end{multline}

We now distinguish between two subcases: when $|y_0| > \l$ and when $|y_0| \le \l$.
Focusing initially on the subcase $|y_0| > \l$: our main task is to show that \begin{equation} \label{w0tauLa}
\delta \gtrsim w_0^2 \tau L |y_0|^{-1}.
\end{equation}
Since $y_0 \not \in (-\l,0)$, the relation $b \le 4a$ does not
imply $\int_{-1/2}^{1/2} \xi^2(x,y_0) \dx \le 4a$. Therefore we must consider both of the following
possibilities:
\begin{enumerate}
\item[(i)] Suppose $\int_{-1/2}^{1/2} \xi^2(x,y_0) \dx \ge 4a$. Then the triangle inequality gives
$||\eta||_{L^2} \ge 2\sqrt{a} - \sqrt{a} = \sqrt{a} = w_0/(\sqrt{2}\pi)$, and~\eqref{etaL2} gives
$$
w_0^2/(2\pi^2) \le ||\eta||_{L^2}^2 \le |y_0| \iint_\Omega \xi_{,y}^2 \dxy \le |y_0| 8 \delta / (\tau L),\\
$$
which implies~\eqref{w0tauLa}.

\item[(ii)] Suppose $\int_{-1/2}^{1/2} \xi^2(x,y_0) \dx \le 4a$. Then we see using the interpolation
inequality~\eqref{interpolation}
\begin{align*}
||\xi_{,x}(\cdot,y_0)||_{L^2(-1/2,1/2)}^4 &= \left( \int_{-1/2}^{1/2} \xi_{,x}^2(x,y_0) \dx \right)^2
\\ &\le 2C^2_{int}\int_{-1/2}^{1/2} \xi^2(x,y_0) \dx \int_{-1/2}^{1/2} \xi_{,xx}^2(x,y_0) \dx
\\ &\quad + 2C_{int}^2 \left( \int_{-1/2}^{1/2} \xi^2(x,y_0) \dx \right)^2
\\ &\le 2C^2_{int}\; 4a\; \pi^2 w_0^{-2} / (8C^2_{int}) + 32C_{int}^2 a^2
\\ &= 1 / 2 +  32C_{int}^2a^2 \le 1,
\end{align*}
where the last inequality holds for small enough $a$ (hence small enough $\cw$).
Therefore
\begin{align} \nonumber
||\eta_{,x}||_{L^2} &\ge ||\xi_{,x}(\cdot,0)||_{L^2} - ||\xi_{,x}(\cdot,y_0)||_{L^2} \ge
\sqrt{2} - 1,\\ \nonumber
||\eta_{,xx}||_{L^2} &\le ||\xi_{,xx}(\cdot,0)||_{L^2} - ||\xi_{,xx}(\cdot,y_0)||_{L^2} \lesssim w_0^{-1}.
\end{align}
By interpolation~\eqref{interpolation}
\begin{equation} \nonumber
(\sqrt{2}-1)^2 \le ||\eta_{,x}||^2_{L^2} \le
C_{int} ||\eta||_{L^2} ||\eta_{,xx}||_{L^2} + C_{int}||\eta||_{L^2}^2.
\end{equation}
The second term satisfies
$$
C_{int}\|\eta\|_{L^2}^2 \le C_{int} ( \|\xi(\cdot,0)\|_{L^2} + \|\xi(\cdot,y_0)\|_{L^2} )^2 \le
16C_{int}^2a \le 8C_{int} \cw^2 \pi^{-2} \le (\sqrt{2}-1)^2/2
$$
for small enough $\cw$. Thus it can be absorbed into the left-hand side of the previous relation. Combining these
results with~\eqref{etaL2}, we get
\begin{equation} \nonumber
1 \lesssim |y_0| \left(\iint_{\Omega} \xi_{,y}^2 \dxy \right) w_0^{-2},
\end{equation}
and so
$$
\delta \ge \frac{\tau L}{8} \iint_\Omega \xi_{,y}^2 \gtrsim w_0^2 \tau L  |y_0|^{-1},
$$
confirming the validity of~\eqref{w0tauLa}.
\end{enumerate}

To conclude the treatment of the subcase $|y_0| > \l$ we must show that~\eqref{speciallb2} holds.
Combining~\eqref{est-later} with~\eqref{w0tauLa} gives
\begin{equation}\label{est-later2}
 \delta \gtrsim h^2 w_0^{-2} |y_0| + w_0^2 \tau L |y_0|^{-1}.
\end{equation}
If $\l = L/2$ then~\eqref{speciallb2} follows from~\eqref{est-later2} (using that $|y_0| > \l$). If on the other
hand $\l<L/2$ then we have (using again that $|y_0| > \l$)
\begin{multline} \nonumber
\delta \gtrsim h^2 w_0^{-2} |y_0| + w_0^2 \tau L |y_0|^{-1} + \min(l_0^{-1},l_0^{-3}) \\
\ge h^2 w_0^{-2} |y_0| + w_0^2 \tau L |y_0|^{-1} + \min(|y_0|^{-1},|y_0|^{-3}),
\end{multline}
which implies~\eqref{speciallb2}. The subcase $|y_0| > \l$ is now complete.
\medskip

Turning now to the other subcase, when $|y_0| \le \l$: using Lemma~\ref{dichotomy3} we
see that for any $y \in (-l_0,y_0)$ either
\begin{equation} \nonumber
1/4 \le \int_{-1/2}^{1/2} \xi_{,x}^2(x,y) \dx
\end{equation}
or
\begin{equation}\label{eq---}
 1/256 \le \int_{-1/2}^{1/2} |u_{x,x}(x,y) + \xi_{,x}^2(x,y)/2|^2 \dx.
\end{equation}
If for a given $y$ the first case is true, we use the interpolation inequality~\eqref{interpolation} to get
\begin{align*}
1/4 &\le \int_{-1/2}^{1/2} \xi_{,x}^2(x,y) \dx
\\ &\le C_{int} \left( \int_{-1/2}^{1/2} \xi^2(x,y) \dx \right)^\oh
\left( \int_{-1/2}^{1/2} \xi_{,xx}^2(x,y) \dx\right)^\oh
+ C_{int} \int_{-1/2}^{1/2} \xi^2(x,y) \dx.
\end{align*}
Since $b \le 4a \sim w_0^2$ and $C_{int} 4a \le 1/8$, the previous estimate implies
\begin{equation}\nonumber
1 \lesssim w_0^2 \int_{-1/2}^{1/2} \xi_{,xx}^2(x,y) \dx.
\end{equation}
We combine this estimate with~\eqref{eq---} to get
\begin{equation} \nonumber
\int_{-1/2}^{1/2} |u_{x,x}(x,y) + \xi_{,x}^2(x,y)/2|^2 + h^2 \xi_{,xx}^2(x,y) \dx \ge
\min(1/256, Ch^2w_0^{-2}) \gtrsim h^2w_0^{-2},
\end{equation}
where we used that $w_0 \ge h$ (see~\eqref{wgeh}). We integrate this in $y$ over $(-l_0,y_0)$ to get
\begin{equation} \nonumber
\int_{-l_0}^{y_0} \int_{-1/2}^{1/2} |u_{x,x}(x,y) + \xi_{,x}^2(x,y)/2|^2 + h^2 \xi_{,xx}^2(x,y) \dx \dy
\gtrsim h^2w_0^{-2} (l_0 - |y_0|).
\end{equation}
This combined with~\eqref{est-later} implies
\begin{equation}\label{lb-now1}
\delta \gtrsim h^2 w_0^{-2} l_0. 
\end{equation}
Since $b \le 4a$, we have that $\int_{-1/2}^{1/2} \xi^2(x,y_0) \dx \le 4a$ and arguing as
we did earlier (in case (ii)) gives~\eqref{w0tauLa}.

To conclude the treatment of the subcase $|y_0| \leq \l$ we must show that~\eqref{speciallb2} holds.
If $\l = L/2$ then~\eqref{lb-now1} gives
\begin{equation}
\delta \gtrsim h^2 w_0^{-2} \l = h^2 w_0^{-2} L/2,
\end{equation}
which implies~\eqref{speciallb2}. If on the other hand $\l < L/2$, then we know
$$
\delta \gtrsim h^2 w_0^{-2} l_0 + w_0^2 \tau L |y_0|^{-1} + \min(l_0^{-1},l_0^{-3}).
$$
Since $|y_0| \leq l_0$, we have that $w_0^2\tau L |y_0|^{-1} \ge w_0^2\tau L l_0^{-1}$, this gives
$$
\delta \gtrsim h^2 w_0^{-2} l_0 + w_0^2 \tau L \l^{-1} + \min(l_0^{-1},l_0^{-3}),
$$
which implies~\eqref{speciallb2}. This completes the treatment of the subcase $|y_0| \leq \l$. The lower bound
half of Theorem \ref{thm1} has now been fully established.



\providecommand{\bysame}{\leavevmode\hbox to3em{\hrulefill}\thinspace}
\providecommand{\MR}{\relax\ifhmode\unskip\space\fi MR }
\providecommand{\MRhref}[2]{%
  \href{http://www.ams.org/mathscinet-getitem?mr=#1}{#2}
}
\providecommand{\href}[2]{#2}

\end{document}